\documentclass[12pt,leqno,twoside]{amsart}
\usepackage{amssymb,amsmath,amsthm,soul,color}
\usepackage{t1enc}
\usepackage{a4,indentfirst,latexsym}
\usepackage{graphics}
\usepackage{mathrsfs}
\usepackage{cite,enumitem,graphicx}
\usepackage[colorlinks=true,urlcolor=blue,
citecolor=red,linkcolor=blue,linktocpage,pdfpagelabels,
bookmarksnumbered,bookmarksopen]{hyperref}
\usepackage[english]{babel}
\usepackage[left=2.61cm,right=2.61cm,top=2.72cm,bottom=2.72cm]{geometry}
\usepackage[normalem]{ulem}

\usepackage[hyperpageref]{backref} 

\newtheorem{Th}{Theorem}[section]
\newtheorem{Prop}[Th]{Proposition}
\newtheorem{Lem}[Th]{Lemma}

\newtheorem{Rem}[Th]{Remark}

\newenvironment{altproof}[1]
{\noindent
{\em Proof of {#1}}.}
{\nopagebreak\mbox{}\hfill $\Box$\par\addvspace{0.5cm}}

\newcommand{\wt}{\widetilde}

   \newcommand{\vp}{\varphi}
   
   \newcommand{\eps}{\varepsilon}

   \def\div{\mathop{\mathrm{div}}}

   \def\supp{\mathrm{supp}}

   \def\d{\diamond}

   \def\R{\mathbb{R}}

   \def\curl{\mathrm{curl}}

   \def\V{\mathcal{V}}

   \def\W{\mathcal{W}}

\def\rh{\rightharpoonup}
\def\irn{\int_{\rn}}
\def\rn{\mathbb{R}^N}

\newcommand{\cC}{{\mathcal C}}
\newcommand{\cD}{{\mathcal D}}

\newcommand{\cM}{{\mathcal M}}

\newcommand{\cV}{{\mathcal V}}
\newcommand{\cW}{{\mathcal W}}

\def\r{\mathbb{R}}
\def\r3{\mathbb{R}^3}

\def\rn{\mathbb{R}^N}

\def\eps{\varepsilon}
\def\rh{\rightharpoonup}
\def\irn{\int_{\rn}}

\def\io{\int_{\Omega}}

\def\ir3{\int_{\r3}}
\def\vp{\varphi}
\def\wt{\widetilde}
\def\wh{\widehat}
\def\ol{\overline}

\def\lam{\lambda}
\def\lam1{\lambda_1}

\def\d1p{\mathcal{D}^{1,p}}

\def\curlop{\nabla\times}
\newcommand{\weakto}{\rightharpoonup}

\numberwithin{equation}{section}

\title{On a critical curl-curl system of Lane--Emden type}

\author[I. Arkoudis]{Ioannis Arkoudis}
\author[J. Mederski]{Jaros\l aw Mederski}
\author[A. Szulkin]{Andrzej Szulkin}

\address[I. Arkoudis]{\newline\indent
	{Institute of Mathematics,
		\newline\indent 
		Polish Academy of Sciences,
		\newline\indent 
		ul. \'Sniadeckich 8, 00-656 Warsaw, Poland
	}
}
\email{\href{mailto:jarkoudis@impan.pl}{jarkoudis@impan.pl}}

\address[J. Mederski]{\newline\indent
	{Institute of Mathematics,
		\newline\indent 
		Polish Academy of Sciences,
		\newline\indent 
		ul. \'Sniadeckich 8, 00-656 Warsaw, Poland,		\newline\indent
		and
		\newline\indent 
		Faculty of Mathematics and Computer Science,		\newline\indent 
		Nicolaus Copernicus University, \newline\indent ul. Chopina 12/18,		 87-100 Toru\'n, Poland
	}
}
\email{\href{mailto:jmederski@impan.pl}{jmederski@impan.pl}}

\address[A. Szulkin]{\newline\indent 
	Department of Mathematics,
	\newline\indent 
	Stockholm University,
	\newline\indent 
	106 91 Stockholm, Sweden
}
\email{\href{mailto:andrzejs@math.su.se}{andrzejs@math.su.se}}

\subjclass[2010]{Primary: 35Q60; Secondary: 35B33, 35J30, 35J47}
\keywords{curl-curl operator, Lane--Emden system, critical hyperbola, Sobolev-curl inequality, ground state, variational methods, concentration-compactness.}

\hypersetup{hidelinks}
\begin{document}

\begin{abstract}
We study the system $\curlop\curlop u = |v|^{q-2}v$,
$\curlop\curlop v = |u|^{p-2}u$ of Lane--Emden type in
$\mathbb R^3$, with exponents $p,q>3$ on the critical hyperbola
$1/p+1/q=1/3$. We prove the existence of a ground state solution by reducing the
system to a quasilinear fourth-order curl-curl equation and minimizing the associated Sobolev-curl quotient. 
A concentration-compactness argument shows there are finitely many concentration
points for a Palais--Smale sequence $(u_n)$. This yields a.e.
 convergence of $\curlop\curlop u_n$ to $\curlop\curlop u$ where $u$ is the weak limit of $(u_n)$. We also prove that
the optimal Sobolev-curl constant is attained and is strictly larger
than the usual second-order Sobolev constant.
If $p=q$, we show $u$ and $v$ are necessarily equal for every solution. In an appendix we show that our concentration argument also applies to a scalar quasilinear fourth order equation. 
\end{abstract}

\maketitle

\section{Introduction} \label{sec:intro}

Lane--Emden type systems arise naturally in the study of nonlinear elliptic equations
and have been extensively investigated due to their strong connections with
astrophysics, geometry and variational methods.
A prototypical Lane--Emden system in $\mathbb{R}^N$ takes the form
\begin{equation}\label{LE-system}
\begin{cases}
-\Delta u = |v|^{q-2}v \\
-\Delta v = |u|^{p-2}u
\end{cases}
\qquad \text{in } \mathbb{R}^N,
\end{equation}
where $N \geq 3$ and the exponents $p,q>1$ satisfy suitable conditions.
In the critical case, i.e. when $(p,q)$ lies on the so--called critical hyperbola
\[
\frac{1}{p} + \frac{1}{q} = \frac{N-2}{N},
\]
system \eqref{LE-system} exhibits lack of compactness due to the invariance
under translations and dilations.

The Lane--Emden system \eqref{LE-system} is of Hamiltonian type and can be studied
by variational techniques. In the critical regime the existence and qualitative
properties of solutions are strongly influenced by concentration phenomena and
blow--up behavior. While positive solutions are by now well understood, the construction
of sign--changing or nodal solutions remains a challenging problem, especially in the
absence of additional symmetries or explicit formulas.

Motivated by these difficulties, several authors have developed refined variational
frameworks in order to overcome the lack of compactness.
A fruitful approach consists in exploiting symmetry properties of the problem or in
recasting the system into an equivalent higher--order equation via a reduction
procedure. These methods have led to the existence of nonradial and sign--changing
solutions for critical Lane--Emden systems and related equations.

Existence and uniqueness of a ground state solution to \eqref{LE-system} has been shown by Hulshof and van der Vorst \cite{Hulshof}. For a survey of equations of Lane--Emden type, both in bounded domains and in $\rn$, with extensive lists of references, we refer to Bonheure, Moreira dos Santos, Tavares \cite{bmt} and Pistoia, Schiera, Tavares \cite{pst}. 

In this paper we investigate an analogue of \eqref{LE-system} with $-\Delta$ replaced by the curl-curl operator $\curlop\curlop\cdot: \r3\to\r3$. To simplify notation, here and below we write $\curlop\curlop u$ for $\curlop(\curlop u)$ if this causes no ambiguity. More precisely, we study the critical curl-curl system 
\begin{equation} \label{system}
\begin{cases}
\curlop\curlop u = |v|^{q-2}v \\
\curlop\curlop v = |u|^{p-2}u \\
u\in \cD^{2,q'}(\curl;\r3) \\
v\in \cD^{2,p'}(\curl;\r3)
\end{cases}
\end{equation}
where 
\[
p,q>3, \quad p'=\frac{p}{p-1},  \quad q'=\frac{q}{q-1}
\]
 and
\begin{eqnarray}
\cD^{2,q'}(\curl;\r3) & := &  \{u\in L^p(\r3,\r3): \curlop\curlop u \in L^{q'}(\r3,\r3)\}, \label{Dcurl1} \\
\cD^{2,p'}(\curl;\r3) & := &  \{v\in L^q(\r3,\r3): \curlop\curlop v \in L^{p'}(\r3,\r3)\}. \label{Dcurl2}
\end{eqnarray}
These spaces can also be defined as completions of $\cC^{\infty}_0(\R^3,\R^3)$ with respect to the norms
 \[
 \|u\|_{q',p} := \big(|u|_p^2 + | \nabla \times \nabla \times u |_{q'}^2\big)^{1/2} \qquad \text{in }\cD^{2,q'}(\curl;\R^3)
 \] 
 and
 \[
 \|v\|_{p',q} :=\big(|v|_q^2 + | \nabla \times \nabla \times v |_{p'}^2\big)^{1/2} \qquad \text{in }\cD^{2,p'}(\curl;\R^3),
 \] 
see the arguments in the proof of \cite[Lemma A.1]{MedSz2}.
 Here and in the sequel $|\cdot|_r$ denotes the $L^r$-norm for $r\in[1,\infty]$. We shall assume that $p$ and $q$ lie on the critical hyperbola which for $N=3$ takes the form 
\begin{equation} \label{crithyp}
\dfrac{1}{p}+\dfrac{1}{q}=\dfrac{1}{3}.
\end{equation} 
Similarly as for \eqref{LE-system}, this hyperbola can be derived either via scaling or via the Sobolev embedding. Note that in view of \eqref{crithyp}, $p$ in \eqref{Dcurl1} is the critical Sobolev exponent, i.e. there is an embedding $\cD^{2,q'}(\r3,\r3) \hookrightarrow L^p(\r3,\r3)$, and similarly $q$ in \eqref{Dcurl2} is a critical Sobolev exponent. We shall write 
\[
p = (q')^{**} := \frac{3q'}{3-2q'}.
\]
 Here and in what follows  
\[
\cD^{2,q'}(\r3,\r3) := \{u\in L^p(\r3,\r3): \Delta u\in L^{q'}(\r3,\r3)\}
\]
endowed with the norm $|\Delta u|_{q'}$. This norm is well-known to be equivalent to the usual one involving all second derivatives, see e.g. \cite[Corollary 9.10]{GilbargTrudinger}.

We will study the system \eqref{system} by means of the equation 
\begin{equation}\label{equation}
\nabla \times \nabla \times \left(|\nabla \times \nabla \times u |^{q'-2}(\nabla \times \nabla \times u)\right)=|u|^{p-2}u \ , \quad u\in \cD^{2,q'}(\curl;\R^3).
\end{equation}
The following proposition justifies our approach.

\begin{Prop} \label{reduction}
	A pair of vector fields $(u,v)$ is a solution to the system \eqref{system} if and only if $u$ is a solution to the equation (\ref{equation}) and $v=|\nabla \times \nabla \times u|^{q'-2}(\nabla \times \nabla \times u)$.
\end{Prop}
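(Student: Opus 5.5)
The plan is to make precise what ``solution'' means on each side, namely distributional solutions in the indicated spaces, and then check the two implications directly. The key algebraic fact is that the map $w\mapsto |w|^{q'-2}w$ is a bijection of $\r3$ with inverse $z\mapsto |z|^{q-2}z$, because $(q'-1)(q-1)=1$. Moreover, it maps $L^{q'}(\r3,\r3)$ onto $L^{q}(\r3,\r3)$, since $|\,|w|^{q'-2}w\,|^{q}=|w|^{(q'-1)q}=|w|^{q'}$. The proof therefore reduces to transporting integrability and the equations through this pointwise bijection.

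For the implication ``$\Rightarrow$'', let $(u,v)$ solve \eqref{system}. The first equation gives $\curlop\curlop u=|v|^{q-2}v$ a.e., as $L^{q'}$ functions; this uses $u\in\cD^{2,q'}(\curl;\r3)$ and $v\in L^q$. Applying the inverse map pointwise yields $v=|\curlop\curlop u|^{q'-2}\curlop\curlop u$ a.e. Substituting this into the second equation of \eqref{system} gives exactly \eqref{equation} in the distributional sense, with $u\in\cD^{2,q'}(\curl;\r3)$.

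For the implication ``$\Leftarrow$'', let $u$ solve \eqref{equation} and define $v:=|\curlop\curlop u|^{q'-2}\curlop\curlop u$. Then $v\in L^q$ by the norm identity above. Equation \eqref{equation} says precisely that $\curlop\curlop v=|u|^{p-2}u$ in the sense of distributions. Since $u\in L^p$, we have $|u|^{p-2}u\in L^{p'}$, so $v\in\cD^{2,p'}(\curl;\r3)$ by the definition \eqref{Dcurl2}. Finally, inverting the pointwise map gives $|v|^{q-2}v=\curlop\curlop u$, which is the first equation of \eqref{system}.

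The only point needing care, and the main (mild) obstacle, is the notion of weak solution. If solutions are defined via test functions in $\cC_0^\infty$, the argument above suffices as written. If instead solutions are defined as critical points tested against the whole spaces $\cD^{2,q'}(\curl;\r3)$ and $\cD^{2,p'}(\curl;\r3)$, I would use the density of $\cC_0^\infty(\r3,\r3)$ in these spaces, as recalled from \cite[Lemma A.1]{MedSz2}. By H\"older's inequality, with the pairings $L^p\times L^{p'}$ and $L^q\times L^{q'}$, both sides of each weak formulation are continuous in the test function. Hence testing with smooth functions is equivalent to testing with the whole space, and the two notions of solution coincide.
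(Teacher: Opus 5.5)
Your proposal is correct and follows the same route as the paper. Both directions rest on inverting $w\mapsto|w|^{q'-2}w$ pointwise via $z\mapsto|z|^{q-2}z$. The integrability checks $|\curlop\curlop u|^{q'-2}\curlop\curlop u\in L^q$ and $|u|^{p-2}u\in L^{p'}$ then give $v\in\cD^{2,p'}(\curl;\R^3)$.

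Your extra remark about testing against $\cC_0^\infty$ versus the whole spaces (via density and H\"older) is a harmless refinement. The paper leaves it implicit by treating the equations as holding a.e./in the distributional sense.
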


This reduction is known for \eqref{LE-system}, see e.g. \cite{cs, Hulshof}, and the reduced equation is
\[
\Delta(|\Delta u|^{q'-2}\Delta u) = |u|^{p-2}u.
\]
Existence of a solution (a ground state) for this equation has been shown by P.L. Lions \cite{Lions} by using his concentration-compactness principle, as observed in \cite{Hulshof}.
The presence of the curl-curl operator introduces additional analytical difficulties
since the associated functional is strongly indefinite and the kernel of the operator
is infinite-dimensional. Therefore the arguments in \cite{Lions} need to be considerably modified.

The curl-curl problem for a single equation has been recently studied in several papers. See e.g. the surveys  \cite{BartschMederski3} and \cite{MederskiSchino} where also the physical background is discussed. Curl-curl systems of Lane--Emden type, in the subcritical case and in a bounded domain, have been considered in a very recent work by Chen, Song, Zhang \cite{csz1, csz2} and Song, Sun, Zou \cite{ssz}. In \cite{csz1, ssz} it is assumed that $p,q\in(2,6)$ while in \cite{csz2} $p\in(2,6)$ and $q=6$ (so $p,q$ are still subcritical in the sense that they satisfy $1/p+1/q>1/3$; however, also the case $p=q=6$ is treated there and it is shown that $u=v$ then, as in Theorem \ref{p=q} below). 

The functional corresponding to equation \eqref{equation} is $J: \cD^{2,q'}(\curl;\r3)\to \R$,
\[
J(u) := \frac1{q'}\ir3|\curlop\curlop u|^{q'}\,dx -\frac1p\ir3|u|^p\,dx
\]
where $\cD^{2,q'}(\curl;\r3)$ is as in \eqref{Dcurl1}. 

Our main result is

\begin{Th}\label{thm:main}
There is a ground state solution $u$ to \eqref{equation}, and hence via Proposition \ref{reduction}, $(u,v)$ with $v=|\nabla \times \nabla \times u|^{q'-2}(\nabla \times \nabla \times u)$ is a ground state solution to \eqref{system}.
\end{Th}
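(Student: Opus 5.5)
We will obtain the ground state by minimizing the Sobolev-curl quotient
\[
S_{\curl}:=\inf\Big\{\frac{|\curlop\curlop u|_{q'}}{|u|_p}:\ u\in\cD^{2,q'}(\curl;\r3)\setminus\{0\}\Big\}.
\]
Equivalently, we minimize $\|u\|:=|\curlop\curlop u|_{q'}$ on the constraint $\{|u|_p=1\}$. Elements of the kernel of the curl-curl operator are gradients $\nabla\phi$. These have finite $L^p$ norm but satisfy $\curlop\curlop\nabla\phi=0$. So the first step is to show that $S_{\curl}>0$, i.e. that $\|\cdot\|$ is a norm on $\cD^{2,q'}(\curl;\r3)$.

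To see this, we use the Helmholtz decomposition $u=w+\nabla\phi$ with $\div w=0$; the Riesz transforms are bounded on $L^p$ for $1<p<\infty$. For divergence-free $w$ we have $\curlop\curlop w=-\Delta w$, so the standard embedding $\cD^{2,q'}\hookrightarrow L^p$ gives $|w|_p\le C|\curlop\curlop w|_{q'}$. However, $\nabla\phi$ contributes to $|u|_p$ without contributing to $\|u\|$. So we must work in a space where gradients are excluded. Following \cite{MedSz2}, the natural choice is the Nehari--Pankov type set
\[
\mathcal N=\{u:\ J'(u)u=0,\ J'(u)\nabla\phi=0\ \forall \phi\}.
\]
On $\mathcal N$ the constraint $J'(u)\nabla\phi=0$ reads $\div(|u|^{p-2}u)=0$. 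For each $w$ in the divergence-free part there is a unique $\phi=\phi(w)$ maximizing $-\frac1p|w+\nabla\phi|_p^p$, that is, minimizing $|w+\nabla\phi|_p$ by strict convexity. This reduces $J$ to a functional of $w$ alone, and we define
\[
S_{\curl}=\inf_{w\neq 0}\frac{|\curlop\curlop w|_{q'}}{\inf_\phi|w+\nabla\phi|_p}.
\]
The reduced quotient is then bounded below by the classical second-order Sobolev constant. Its minimizers, rescaled by a Lagrange multiplier (using homogeneity, since $q'<p$), are critical points of $J$ at the least level $c=\inf_{\mathcal N}J$. This gives the ground state, and Proposition \ref{reduction} then yields the pair $(u,v)$.

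The core step is attaining the infimum. We take a minimizing sequence $u_n=w_n+\nabla\phi(w_n)$, and by Ekeland's principle we may assume it is a Palais--Smale sequence for $J$ at level $c$. The sequence is bounded. Using the invariances $u\mapsto r^{3/p}u(r\,\cdot+y)$ of the quotient, we normalize it, for instance via a concentration function of $|u_n|^p$ at scale $1$, so that the weak limit $u\ne0$. We then apply Lions' concentration-compactness to the measures $|\curlop\curlop u_n|^{q'}\rightharpoonup\mu$ and $|u_n|^p\rightharpoonup\nu$. This should give $\nu=|u|^p+\sum_j\nu_j\delta_{x_j}$ with $S_{\curl}\nu_j^{1/p}\le\mu_j^{1/q'}$, and hence only finitely many atoms, since each atom carries energy at least a fixed positive amount tied to $c$.

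The main obstacle is nonlinearity: the term $|\curlop\curlop u|^{q'-2}\curlop\curlop u$ is nonlinear in second derivatives, and the gradient part $\nabla\phi(w_n)$ is nonlocal. We therefore need a.e. convergence $\curlop\curlop u_n\to\curlop\curlop u$ in order to pass to the limit in $J'(u_n)\to0$ and split via Brezis--Lieb. We will try the following. Away from the finitely many $x_j$, test $J'(u_n)-J'(u)$ against $\psi(u_n-u)$, where $\psi$ is a cutoff vanishing near the $x_j$. Strong local $L^p$ convergence of the divergence-free parts, together with control of the nonlocal $\phi$-parts via compactness of the commutators with Riesz transforms, should make the lower-order terms vanish. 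Monotonicity of $\xi\mapsto|\xi|^{q'-2}\xi$ then gives $\curlop\curlop u_n\to\curlop\curlop u$ in $L^{q'}_{loc}$ off the atoms, which suffices for a.e. convergence.

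With this convergence, $u$ is a nontrivial critical point of $J$, so $u\in\mathcal N$ and $J(u)\ge c$. Weak lower semicontinuity, i.e. Fatou applied to $J(u_n)-\frac1pJ'(u_n)u_n$, gives $J(u)\le c$. Hence $u$ is a ground state.
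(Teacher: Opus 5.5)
\textbf{There is a genuine gap: the finiteness of the concentration set is asserted, not proved, and the route you sketch fails because of the gradient part.}

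Your outline matches the paper's: reduce to the divergence-free part by minimizing over gradients, compare with $S_{2,q'}$, apply Ekeland, normalize, show there are finitely many concentration points, get $\curlop\curlop u_n\to\curlop\curlop u$ a.e.\ away from them, and conclude by lower semicontinuity. The problem is the step that carries the whole proof.

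You apply Lions' lemma to $|u_n|^p$ and $|\curlop\curlop u_n|^{q'}$ for the \emph{full} fields $u_n=w_n+\nabla\phi(w_n)$. You then claim $S_{\curl}\nu_j^{1/p}\le\mu_j^{1/q'}$ with the atoms bounded below. Neither half is available.
\begin{itemize}
\item A reverse Sobolev inequality for $\varphi(u_n-u)$ cannot be derived this way. The curl-curl energy does not see gradients, and $\varphi\nabla\phi_n$ is no longer curl-free.
\item Moreover $\curlop\curlop(\varphi\nabla\phi_n)=\curlop(\nabla\varphi\times\nabla\phi_n)$ involves derivatives of the gradient part, which is only bounded in $L^p$. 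For the same reason $\varphi u_n$, and your $\psi(u_n-u)$ in the a.e.\ step, are not admissible test functions for $J'(u_n)$.
\item A reverse inequality by itself gives no lower bound on atoms. One needs either the converse estimate $\mu_j\lesssim\nu_j$ from testing the Palais--Smale condition with a localized function, which is what ``energy tied to $c$'' requires and which needs the inadmissible test functions above. Or one needs Lions' strict-concavity argument, which requires the reverse inequality with the sharp constant $S_{\curl}$ for the full fields; that is again unavailable.
\item Your concentration-function normalization also depends on this atom lower bound, to exclude a point mass of size $\le\delta$ in the unit ball. The paper instead gets a nonzero weak limit from Solimini's theorem applied to $\widehat v_n$, which is bounded in $\cD^{1,3q'/(3-q')}(\mathbb{R}^3,\mathbb{R}^3)$.
\end{itemize}

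\textbf{The missing idea is to run the concentration analysis only on the divergence-free parts $z_n=v_n-v$.}
\begin{itemize}
\item These have local $W^{2,q'}$ bounds, so $\varphi z_n$ is admissible and $\curlop\curlop(\varphi z_n)=\varphi\curlop\curlop z_n+o(1)$ in $L^{q'}$.
\item With $\rho$ the defect measure of $|z_n|^p$, the ordinary Sobolev inequality gives $S_{2,q'}\rho_j^{q'/p}\le\mu_j$.
\item Testing $J'(u_n)$ with $\varphi z_n$ gives $\mu_j\le C\nu_j^{1/p'}\rho_j^{1/p}$, where $\nu$ is the defect measure of $|u_n-u|^p$.
\item The crucial link is $\nu_j\le\widetilde C\rho_j$. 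Write $w(v_n)-w(v)=\nabla\xi_n$. The local $L^r$ convergence of Lemma \ref{Lem:WConcentration} with $r\in(3,p)$, together with Poincar\'e and Morrey, gives $\xi_n\to0$ in $L^\infty_{\mathrm{loc}}$. Hence $\varphi(u_n-u)=\varphi z_n+\nabla(\varphi\xi_n)+o(1)$ in $L^p$. The orthogonality \eqref{eq:eqf} annihilates the term $\nabla(\varphi\xi_n)\in\mathcal W$, and strong monotonicity of $a\mapsto|a|^{p-2}a$ finishes.
\end{itemize}
Together these give $\rho_j\ge\rho_*>0$, and hence finiteness of $\Sigma$.

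In the a.e.\ step the paper tests with $\eta z_n$ and uses $\curlop\curlop z_n=\curlop\curlop(u_n-u)$, so no control of the gradient parts is needed there. Your Riesz-commutator control would not work in any case, since $w\mapsto\nabla\phi(w)$ is nonlinear (it solves a $p$-Laplace type equation). The local compactness of the gradient parts, needed for $u_n\to u$ a.e.\ and for the limit to have the form $v+w(v)$, is the separate and nontrivial Lemma \ref{Lem:WConcentration}.
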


Recall that a ground state is a nontrivial solution with least energy among all nontrivial solutions.

It is clear that if $u$ is a ground state solution for \eqref{equation}, then $(u,v)$ with $v$ as in Proposition \ref{reduction} is a solution for \eqref{system}. To see that this is indeed a ground state we note that the functional corresponding to \eqref{system} is
\[
I(u,v) := \ir3\langle \curlop u, \curlop v\rangle\,dx -\frac1p\ir3|u|^p\,dx - \frac1q\ir3|v|^q\,dx.
\] 
Setting $v=|\nabla \times \nabla \times u|^{q'-2}(\nabla \times \nabla \times u)$ we obtain after integration by parts that $I(u,v) = J(u)$, hence the conclusion. 

We shall also show that

\begin{Th} \label{p=q}
If $p=q$, then any solution $(u,v)$ to \eqref{system} satisfies $u=v$.
\end{Th}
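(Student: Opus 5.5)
If $p=q$, then \eqref{crithyp} forces $p=q=6$ and $q'=p'=6/5$. Hence $u$ and $v$ both lie in $\cD^{2,6/5}(\curl;\r3)\subset L^6(\r3,\r3)$, and the system reads $\curlop\curlop u=|v|^4v$, $\curlop\curlop v=|u|^4u$. The plan is to subtract the two equations and test the difference with $u-v$. Set $w:=u-v$. Then
\[
\curlop\curlop w = -\big(|u|^4u-|v|^4v\big).
\]
Formally, multiplying by $w$ and integrating by parts gives
\[
\ir3|\curlop w|^2\,dx = -\ir3\langle |u|^4u-|v|^4v,\,u-v\rangle\,dx .
\]
The map $\xi\mapsto|\xi|^4\xi$ is the gradient of the convex function $\frac16|\xi|^6$, so it is monotone, and strictly so: $\langle |a|^4a-|b|^4b,a-b\rangle>0$ whenever $a\ne b$. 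The left-hand side is nonnegative and the right-hand side is nonpositive, so both vanish. Strict monotonicity then gives $u=v$ a.e.

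The real work is justifying the identity $\ir3\langle\curlop\curlop w,w\rangle\,dx=\ir3|\curlop w|^2\,dx$ for $w\in\cD^{2,6/5}(\curl;\r3)$. First, the pairing on the left is finite, since $\curlop\curlop w\in L^{6/5}$ and $w\in L^6$ are dual. I would approximate $w$ by $w_n\in\cC_0^\infty(\r3,\r3)$ in the norm $\|\cdot\|_{6/5,6}$, which is possible by the density stated after \eqref{Dcurl2}. For smooth compactly supported fields the identity is just integration by parts, and the left-hand side passes to the limit by the $L^{6/5}$--$L^6$ duality. Consequently $(\curlop w_n)$ is Cauchy in $L^2$, since $|\curlop(w_n-w_m)|_2^2=\ir3\langle\curlop\curlop(w_n-w_m),w_n-w_m\rangle\,dx\to0$. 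It therefore converges in $L^2$, and its limit must be $\curlop w$ because $w_n\to w$ in $L^6$ and hence in distributions. This shows $\curlop w\in L^2$ and proves the identity. This step is the main technical point, though it should be routine.

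Alternatively, one can avoid $\curlop w$ entirely. Since $\ir3\langle\curlop\curlop w,w\rangle\,dx\ge0$ holds for smooth $w$ and passes to the limit, it holds for every $w$ in the space. Combined with strict monotonicity, this gives $0\le -\ir3\langle|u|^4u-|v|^4v,u-v\rangle\,dx\le 0$, so the integrand vanishes a.e. and $u=v$.
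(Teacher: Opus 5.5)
Your argument is correct and is essentially the paper's. The paper tests the two equations with $u$ and $v$ separately and combines the four resulting identities, via H\"older's inequality and $|u|_6=|v|_6$, into $\int_{\R^3}|\curlop u-\curlop v|^2\,dx\le 0$; it then returns to the equations to get $|u|^4u=|v|^4v$. You package the same identities into a single test of the difference equation with $u-v$ and conclude $u=v$ directly from strict monotonicity of $\xi\mapsto|\xi|^4\xi$. Your density argument showing $\curlop w\in L^2$ and $\int_{\R^3}\langle\curlop\curlop w,w\rangle\,dx=|\curlop w|_2^2$ for $w\in\cD^{2,6/5}(\curl;\R^3)$ is sound, and it justifies an integration by parts that the paper uses without comment.
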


This result can be found in the paper \cite{ssz} where system \eqref{system} has been considered in a bounded domain and for $p,q\in (2,6)$ as we have already mentioned. The proof is exactly the same but we shall include it for the reader's convenience. We also note that if $p=q$, then $u=v$ solves the equation $\curlop\curlop u = |u|^4u$ which has been considered in \cite{MedSz}.

Recall that the Sobolev constant $S_{2,q'}$ in $\r3$ is defined by the inequality 
\begin{equation} \label{sobolev}
\ir3|\Delta u|^{q'}\,dx \ge S_{2,q'}\Big(\ir3|u|^p\,dx\Big)^{q'/p} 
\end{equation}
which should hold for all $u\in\cD^{2,q'}(\r3)$ and $S_{2,q'}$ is the largest constant with this property (as mentioned earlier, $p$ is the critical Sobolev exponent according to \eqref{crithyp} and $|\Delta u|_{q'}$ is the norm we use). Similarly as in \cite{MedSz, MedSz2} (see also \cite{FrankLoss2}) we define $S_{2,q',\curl}$ as the largest constant such that the inequality
\begin{equation} \label{Scurl}
\ir3|\curlop\curlop u|^{q'}\,dx \ge S_{2,q',\curl} \inf_{\substack{w\in \cD^{2,q'}(\curl;\r3) \\ \curlop w= 0}} \Big(\ir3|u+w|^p\,dx\Big)^{q'/p}
\end{equation}  
holds for all $u\in \cD^{2,q'}(\curl;\r3)$. 

In the proof of Theorem \ref{thm:main} we shall show that $S_{2,q',\curl}$ is attained by some $u\in \cD^{2,q'}(\curl;\r3)$ and this $u$ is a ground state solution. We also have the following relation:

\begin{Th}\label{thm:comparison}
There holds
\[
S_{2,q',\curl}>S_{2,q'}>0.
\]
Moreover, a ground state solution $u$ obtained in Theorem \ref{thm:main} is an optimizer in \eqref{Scurl}, i.e. equality holds there for this $u$.
\end{Th}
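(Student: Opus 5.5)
The plan has three parts: the weak inequality $S_{2,q',\curl}\ge S_{2,q'}$ via a Helmholtz decomposition, the strict inequality from attainment plus the rigidity of Sobolev optimizers, and the optimizer statement from the Nehari identity.

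\emph{Weak inequality.} Since $S_{2,q'}>0$ is the classical second-order Sobolev inequality, it remains to compare the constants. I would use the Helmholtz decomposition in $\cD^{2,q'}(\curl;\r3)$: every $u$ splits as $u=u_0+w$ with $\div u_0=0$ and $\curlop w=0$, where $u_0\in\cD^{2,q'}(\r3,\r3)$. This is obtained by applying the Helmholtz projection (a Calder\'on--Zygmund operator bounded on $L^p$ and $L^{q'}$), as in \cite{MedSz2}. For divergence-free $u_0$ we have $\curlop\curlop u_0=-\Delta u_0$, and $\curlop\curlop u=\curlop\curlop u_0$. Applying \eqref{sobolev} componentwise (or in vector form) gives
\[
\ir3|\curlop\curlop u|^{q'}dx=\ir3|\Delta u_0|^{q'}dx\ge S_{2,q'}|u_0|_p^{q'}\ge S_{2,q'}\inf_{\curlop w=0}|u+w|_p^{q'}.
\]
The inequality $|u_0|_p\ge \inf_{\curlop w=0}|u+w|_p$ holds because $-w$ is admissible. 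Hence $S_{2,q',\curl}\ge S_{2,q'}$.

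\emph{Strict inequality.} Here I would use the attainment of $S_{2,q',\curl}$ proved in Theorem \ref{thm:main}. Take an optimizer $u$; after replacing $u$ by $u+w$ for the minimizing $w$ (the infimum is attained by strict convexity and coercivity of $w\mapsto |u+w|_p^p$ on the closed subspace of curl-free fields), we may assume $\inf_{\curlop w=0}|u+w|_p=|u|_p$. Equality would give
\[
S_{2,q'}|u_0|_p^{q'}\le |\Delta u_0|_{q'}^{q'}=S_{2,q',\curl}|u|_p^{q'}=S_{2,q'}|u|_p^{q'}\le S_{2,q'}|u_0|_p^{q'}.
\]
Hence $u_0$ is an optimizer of the vector Sobolev inequality, and $|u|_p=|u_0|_p$ with $u_0$ also a minimizer of $w\mapsto|u_0+w|_p$. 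The main obstacle is to derive a contradiction from this.
\begin{itemize}
\item \emph{Structure of Sobolev optimizers.} Optimizers of the vector inequality with $|\Delta\cdot|$ and $|\cdot|$ Euclidean norms should be of the form $e\,U$ with $e\in\r3$ a constant vector and $U$ a scalar optimizer, which is radial after translation and dilation. One gets this by considering componentwise inequalities and Minkowski-type equality cases, or by Lions' result together with the Euler--Lagrange equation.
\item \emph{Contradiction from the divergence constraint.} For such a field, $\div(eU)=e\cdot\nabla U\neq0$ for radial nonconstant $U$, contradicting $\div u_0=0$.
\item \emph{Fallback.} If that structural step is too delicate, I would instead use the minimality condition in $w$: $|u_0|^{p-2}u_0$ is orthogonal to all gradients, so $\div(|u_0|^{p-2}u_0)=0$. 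Together with the Euler--Lagrange equation $\Delta(|\Delta u_0|^{q'-2}\Delta u_0)=c|u_0|^{p-2}u_0$, I would derive a contradiction with the explicit optimizer structure.
\end{itemize}

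\emph{Optimizer statement.} The ground state from Theorem \ref{thm:main} is obtained as a minimizer of the quotient, so equality in \eqref{Scurl} holds for it by construction. If needed, I would verify this through the Nehari identity $|\curlop\curlop u|_{q'}^{q'}=|u|_p^p$. I would combine it with the fact that $J(u)$ equals the minimal level $(\frac1{q'}-\frac1p)S_{2,q',\curl}^{p/(p-q')}$. The ground state's $u$ is already curl-free-optimal, since the equation tested with gradients gives $\ir3|u|^{p-2}u\cdot\nabla\varphi\,dx=0$, which is exactly the minimality condition in $w$.
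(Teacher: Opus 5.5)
Your outline of the weak inequality and of the optimizer statement follows the paper: the Helmholtz splitting $u=u_0+w$ with $\nabla\times\nabla\times u=-\Delta u_0$, and the fact that the ground state $u=m(v)$ is constructed as a minimizer of the quotient. But the weak inequality rests on the vector-valued form of \eqref{sobolev} with the \emph{scalar} constant $S_{2,q'}$. Your suggestion to apply the scalar inequality componentwise does not give this. Since $q'<2$, we have $(\sum_i|\Delta u_i|^2)^{q'/2}\le\sum_i|\Delta u_i|^{q'}$, which goes the wrong way. Summing the three scalar inequalities therefore only yields the constant $3^{q'/2-1}S_{2,q'}<S_{2,q'}$. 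This matters: both $S_{2,q',\mathrm{curl}}\ge S_{2,q'}$ and the equality chain in your strict-inequality argument need the sharp constant for vector fields.

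The genuine gap is the rigidity step, which you flag as the main obstacle and leave unresolved. You only assert that optimizers of the vector inequality have the form $eU$, and the routes you suggest do not produce it. The componentwise route fails as just explained. Lions' concentration-compactness gives existence of optimizers, not their classification. The Euler--Lagrange system $\Delta(|\Delta u_0|^{q'-2}\Delta u_0)=c|u_0|^{p-2}u_0$ couples the components through $|\Delta u_0|$ and $|u_0|$, and your fallback presupposes the same structure.

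The missing idea, which settles both issues, is the Newtonian potential. With $g:=-\Delta u_0$ one has $u_0=I_2(g)$, hence $|u_0|\le I_2(|g|)$ pointwise. Applying the scalar inequality to $I_2(|g|)$ gives the vector inequality with constant $S_{2,q'}$. If $u_0$ is a vector optimizer, then $|u_0|_p\le|I_2(|g|)|_p\le S_{2,q'}^{-1/q'}|g|_{q'}=|u_0|_p$, which forces $|I_2(g)|=I_2(|g|)$ a.e. Choose $x_0$ with $I_2(|g|)(x_0)\in(0,\infty)$ and set $e_0:=I_2(g)(x_0)/|I_2(g)(x_0)|$. This gives $\int_{\mathbb{R}^3}(|g(y)|-e_0\cdot g(y))|x_0-y|^{-1}\,dy=0$ with a nonnegative integrand, so $g=|g|e_0$ a.e. and $u_0=e_0\,I_2(|g|)$. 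Then $\mathrm{div}\,u_0=e_0\cdot\nabla I_2(|g|)=0$, so $I_2(|g|)$ is constant along lines parallel to $e_0$; being in $L^p$, it vanishes, contradicting $u_0\neq0$.

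Radial symmetry of scalar optimizers is not needed; it would require Lieb's strict rearrangement results for the Hardy--Littlewood--Sobolev inequality. Your divergence argument already works for any $u_0=eU$ with $U\in L^p$, radial or not.
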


\medskip

The paper is organized as follows. Section \ref{sec:setting} contains preliminary material including the proof of Proposition \ref{reduction}. In Section \ref{sec:cca} we prove a crucial Proposition \ref{conccomp} whose consequence is a.e. convergence in \eqref{aeconv}. In Section \ref{sec:proofs} we prove Theorems \ref{thm:main}, \ref{p=q} and \ref{thm:comparison}. 
Appendix~\ref{app:scalar-compactness} applies our concentration-compactness argument to the scalar equation involving the $q'$-bilaplacian. This corrects a mistake in the proof of \cite[Proposition A.1]{cs} and gives an additional information that the $q'$-bilaplacian locally strongly converges in $\R^3\setminus\Sigma$ where $\Sigma$ is a (specified) finite set.

\section{Functional setting and preliminaries}\label{sec:setting}

Let $\cD^{1,r}(\R^3,\R^3)$ denote the completion of $\cC^{\infty}_0(\R^3,\R^3)$ with respect to the norm $|\nabla \cdot|_r$. In what follows $\lesssim$ and $\gtrsim$ denote the inequalities up to a multiplicative constant.
The following Helmholtz decomposition holds.

\begin{Lem}\label{defof} 
Let $1<r<\infty$ and 
\begin{eqnarray*}
\cV_r & := & \Big\{v\in L^r(\R^3,\R^3): \int_{\R^3}\langle v,\nabla \varphi\rangle\,dx=0
\text{ for every $\varphi\in \cC^\infty_0(\R^3)$} \Big\} \\
& = & \{v\in L^r(\R^3,\R^3): \div(v) = 0 \text{ in the sense of distributions}\}
\end{eqnarray*}
and
\begin{eqnarray*}
\cW_r & := & \Big\{w\in L^r(\R^3,\R^3): \int_{\R^3}\langle w,\curlop \varphi\rangle\,dx=0
\text{ for every $\varphi\in \cC^\infty_0(\R^3,\R^3)$} \Big\} \\
& = & \{ w\in L^r(\R^3,\R^3): \curlop w = 0 \text{ in the sense of distributions}\}.
\end{eqnarray*}
Then $\V_r$ and $\cW_r$ are closed subspaces of $L^r(\R^3,\R^3)$  and
	\begin{equation}\label{HelmholtzDec}
	L^r(\R^3,\R^3)=\V_r\oplus \W_r.
	\end{equation}
\end{Lem}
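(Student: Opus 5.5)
The plan is to build the decomposition explicitly with Fourier multipliers, namely the Leray projector and its complement, and to get boundedness on $L^r$ from the Calderón--Zygmund (Mikhlin) multiplier theorem.

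\emph{Closedness.} For fixed $\varphi$, the maps $v\mapsto\int_{\R^3}\langle v,\nabla\varphi\rangle\,dx$ and $w\mapsto\int_{\R^3}\langle w,\curlop\varphi\rangle\,dx$ are continuous linear functionals on $L^r(\R^3,\R^3)$, since $\nabla\varphi,\curlop\varphi\in L^{r'}$. Hence $\cV_r$ and $\cW_r$ are intersections of kernels of continuous functionals, so they are closed. The equality of the two descriptions of each space is just the definition of distributional divergence and curl.

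\emph{The projector.} For $u\in\cC_0^\infty(\R^3,\R^3)$ set
\[
Pu:=\mathcal F^{-1}\Big(\frac{\xi\otimes\xi}{|\xi|^2}\hat u\Big),
\]
that is, $(Pu)_j=-\sum_k R_jR_k u_k$ with $R_j$ the Riesz transforms. The symbols $\xi_j\xi_k/|\xi|^2$ are homogeneous of degree $0$ and smooth away from $0$, so Mikhlin's theorem gives $|Pu|_r\lesssim|u|_r$ for $1<r<\infty$. Thus $P$ extends by density to a bounded operator on $L^r$. It is a projection, since its symbol is idempotent and this identity persists by density.

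Set $Q:=I-P$, with symbol $I-\xi\otimes\xi/|\xi|^2$. For Schwartz $u$:
\begin{itemize}
\item $\xi\times(\xi\otimes\xi\,\hat u)/|\xi|^2=0$, so $\curlop Pu=0$;
\item $\xi\cdot\big(\hat u-\xi(\xi\cdot\hat u)/|\xi|^2\big)=0$, so $\div Qu=0$.
\end{itemize}
Passing to limits in the weak formulations, which are continuous in $L^r$, gives $P(L^r)\subset\cW_r$ and $Q(L^r)\subset\cV_r$. Consequently $u=Qu+Pu$ shows $L^r=\cV_r+\cW_r$.

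\emph{Directness (the main obstacle).} It remains to show $\cV_r\cap\cW_r=\{0\}$. Take $z$ in the intersection. Then $\div z=0$ and $\curlop z=0$ in $\mathcal S'$, so the identity $-\Delta z=\curlop\curlop z-\nabla\div z=0$ holds distributionally. Each component of $z$ is therefore harmonic, hence smooth, and lies in $L^r$. By the mean value property over balls $B_R(x)$ and Hölder,
\[
|z(x)|\lesssim R^{-3/r}|z|_{L^r(B_R(x))}\to0 \quad\text{as } R\to\infty,
\]
so $z=0$ (Liouville). Together with the boundedness of $P$, this yields that $L^r=\cV_r\oplus\cW_r$ is a topological direct sum.

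As a consistency check, one should also verify $P|_{\cW_r}=I$ and $P|_{\cV_r}=0$. Both follow from the uniqueness just proved: if $w\in\cW_r$ then $w-Pw=Qw\in\cV_r\cap\cW_r$, so $Qw=0$; similarly for $v\in\cV_r$.

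The only genuinely nontrivial ingredient is the $L^r$-boundedness of the Riesz-transform projector, which I quote from Calderón--Zygmund theory. The rest is density and the Liouville argument.
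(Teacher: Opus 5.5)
Your proof is correct and is essentially the paper's argument. Your $P$ is exactly the map $u\mapsto\nabla\vp$ with $\Delta\vp=\div u$: the paper builds this via the Newtonian potential and cites Iwaniec for $|\nabla\vp|_r\le C_r|u|_r$, where you cite Mikhlin, and both of you kill $\cV_r\cap\cW_r$ by harmonicity plus the mean value formula. The only minor difference is that you obtain harmonicity directly from $-\Delta z=\curlop\curlop z-\nabla\div z$, whereas the paper first writes $v=\nabla\xi$ via Leray's lemma and then observes that $\xi$ is harmonic.
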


\begin{proof}

 Firstly, we easily check that $\V_r$ and $\cW_r$ are closed subspaces of $L^r(\R^3,\R^3)$.
	 
Now, take any $u\in L^r(\R^3,\R^3)$ and $u_n\in\cC_0^{\infty}(\R^3,\R^3)$ such that $u_n\to u$ in $L^r(\R^3,\R^3)$. Let $\vp_n\in\cC^{\infty}(\R^3)$ be the Newtonian potential of $\div(u_n)$, i.e. $\vp_n$ solves $\Delta \vp_n = \div(u_n)$.  Since $u_n\in\cC_0^{\infty}(\R^3,\R^3)$, then by \cite[Proposition 1]{Iwaniec}, $\nabla \vp_n\in L^r(\R^3,\R^3)$ and $|\nabla\vp_n|_r \le C_r|u_n|_r$ where $C_r>0$ is a constant. 
Hence
$$	v_n:=u_n-\nabla \vp_n\in  L^{r}(\R^3,\R^3).
	$$ 
Moreover,
$\Delta (\vp_n-\vp_m) = \div(u_n-u_m)$
and
$$|\nabla \vp_n-\nabla \vp_m|_r\leq C_r |u_n-u_m|_r.$$
Since $\div(v_n)=0$ pointwise and $(v_n)$ is a Cauchy sequence in $L^{r}(\R^3,\R^3)$,  we set $v:=\lim_{n\to\infty}v_n$ in $L^{r}(\R^3,\R^3)$ and get 
	$$\int_{\R^3}\langle v,\nabla \vp\rangle\,dx=\lim_{n\to\infty}\int_{\R^3}\langle v_n,\nabla \vp\rangle\,dx=0$$
	for any $\vp\in\cC_0^{\infty}(\R^3)$. Hence $\div(v) =0$ and $v\in\cV_r$.
	Note that
	$\nabla \vp_n=u_n-v_n\to u-v$ in $L^{r}(\R^3,\R^3)$. Since $\nabla\vp_n\in\cW_r$ and $\cW_r$ is closed,
	$u-v\in \cW_r$ and we get the decomposition
	$$u=v+(u-v)\in \cV_r+\cW_r.$$
	\indent Now take $v\in \cV_r\cap\W_r$. Since $\curlop v =0$,
	 by \cite[Lemma 1.1(i)]{Le}, $v=\nabla\xi$ for some $\xi\in W^{1,r}_{loc}(\R^3)$. Since $\div(v) = 0$, $\xi$ is harmonic and so is $v$. Since $v\in L^{r}(\R^3,\R^3)$, by the mean-value formula we infer that $v=0$, so \eqref{HelmholtzDec} holds.
\end{proof}

\begin{altproof}{Proposition \ref{reduction}}
Let $u\in \cD^{2,q'}(\curl;\R^3)$ be a solution of the equation (\ref{equation}) and  set 
\begin{equation} \label{def_v}
v:=|\nabla \times \nabla \times u|^{q'-2}(\nabla \times \nabla \times u).
\end{equation}
Since $\curlop\curlop u \in L^{q'}(\r3,\r3)$, $v\in  L^{q}(\r3,\r3)$. Equation
\eqref{equation} gives $\curlop\curlop v=|u|^{p-2}u\in L^{p'}(\r3,\r3)$. So $v\in\cD^{2,p'}(\curl;\r3)$
and by \eqref{def_v}, $\curlop\curlop u = |v|^{q-2}v$. Hence  $(u,v)$ solves \eqref{system}.

 To prove the converse statement, suppose  $(u,v)$ is a solution to \eqref{system}. Then $\curlop \curlop u = |v|^{q-2}v \ \in L^{q'}(\R^3,\R^3)$,  $\curlop \curlop v = |u|^{p-2}u \ \in L^{p'}(\R^3,\R^3)$ and the system is satisfied almost everywhere. Substituting $v = |\curlop \curlop u|^{q'-2} \curlop \curlop  u$ into the second equation leads to
\begin{equation*}
\curlop \curlop (|\curlop \curlop u|^{q'-2} \curlop \curlop u)=|u|^{p-2}u, \quad \hbox{a.e. in }\R^3,
\end{equation*}
and the conclusion follows since $u\in \cD^{2,q'}(\curl;\r3)$.
\end{altproof}

Let $u\in\cD^{2,q'}(\curl,\r3)$. In the Helmholtz decomposition we shall write $u=v+w$ where $v\in\cV$, $w\in\cW$ and $\cV,\cW$ are as in the next lemma. So here and below $v$ is \textbf{not the same as in \eqref{system}}. This will cause no confusion as in what follows we shall only work with  equation \eqref{equation} and the functional $J$.

\begin{Lem}\label{Hdecomp} 
Let $\cV := \cV_p\cap \cD^{2,q'}(\curl;\r3)$ and $\cW := \cW_p\cap \cD^{2,q'}(\curl;\r3)$ (recall $p = (q')^{**}$). Then
$\cV$ and $\cW$ are closed subspaces of $\cD^{2,q'}(\curl;\r3)$  and
\begin{equation}\label{Helmholtz2}
\cD^{2,q'}(\curl;\r3)=\cV\oplus \cW.
\end{equation}
Moreover, $\cV\subset\cD^{2,q'}(\r3,\r3)$ and $|\Delta v|_{q'} = |\curlop\curlop v|_{q'}$ for all $v\in\cV$. 
\end{Lem}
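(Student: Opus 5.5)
The plan is to obtain \eqref{Helmholtz2} by restricting the $L^p$ Helmholtz decomposition of Lemma \ref{defof}, with $r=p$, to $\cD^{2,q'}(\curl;\r3)$.

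\emph{Closedness.} Convergence in $\|\cdot\|_{q',p}$ implies convergence in $L^p$. Since $\cV_p$ and $\cW_p$ are closed in $L^p$, the sets $\cV=\cV_p\cap\cD^{2,q'}(\curl;\r3)$ and $\cW=\cW_p\cap\cD^{2,q'}(\curl;\r3)$ are closed in $\cD^{2,q'}(\curl;\r3)$.

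\emph{Splitting.} Take $u\in\cD^{2,q'}(\curl;\r3)$ and write $u=v+w$ with $v\in\cV_p$, $w\in\cW_p$ as in Lemma \ref{defof}. Since $\curlop w=0$ in $\mathcal D'$, we get $\curlop\curlop w=0$. Hence $\curlop\curlop v=\curlop\curlop u\in L^{q'}$. Thus $v\in\cV$, $w\in\cW$, and in fact $\cW=\cW_p$. Uniqueness of the splitting follows from $\cV_p\cap\cW_p=\{0\}$. Continuity of the projections comes from the bound $|v|_p\le C|u|_p$ from Lemma \ref{defof}; by the closed graph theorem, or directly, this gives $\|v\|_{q',p}\lesssim\|u\|_{q',p}$. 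So \eqref{Helmholtz2} is a topological direct sum.

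\emph{The space $\cV$.} For $v\in\cV$, use the distributional identity $\curlop\curlop v=\nabla(\div v)-\Delta v$. With $\div v=0$ this gives $-\Delta v=\curlop\curlop v\in L^{q'}$. Since also $v\in L^p$, the definition of $\cD^{2,q'}(\r3,\r3)$ shows $v\in\cD^{2,q'}(\r3,\r3)$ and $|\Delta v|_{q'}=|\curlop\curlop v|_{q'}$.

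\emph{Main obstacle.} The subtle point is whether "$v\in L^p$, $\Delta v\in L^{q'}$" places $v$ in the completion of $\cC_0^\infty$ under $|\Delta\cdot|_{q'}$, and whether the norm equivalence of \cite{GilbargTrudinger} applies. The paper defines $\cD^{2,q'}(\r3,\r3)$ as exactly this set, so the membership holds by definition. If the completion characterization is needed, I would argue as follows. Let $z$ be the Newtonian-type solution of $\Delta z=\Delta v$ given by the Calderón–Zygmund/Sobolev estimates, so $z\in L^p$. Then $v-z$ is harmonic and in $L^p$, so it vanishes by the mean-value argument already used in Lemma \ref{defof}.
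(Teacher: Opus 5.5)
Your proposal is correct and follows the paper's proof. Restricting the $L^p$ Helmholtz decomposition of Lemma \ref{defof} works because $\curlop\curlop w=0$ for $w\in\cW_p$; this is exactly what makes the paper's ``obvious identity'' $(\cV_p\oplus\cW_p)\cap\cD^{2,q'}(\curl;\r3)=\cV\oplus\cW$ hold, and $\curlop\curlop v=-\Delta v$ for divergence-free $v$ gives the rest. Your extra remarks on continuity of the projections and on the completion characterization go beyond what is needed here, since the paper defines $\cD^{2,q'}(\r3,\r3)$ as a set.
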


\begin{proof}
That $\cV$ and $\cW$ are closed and \eqref{Helmholtz2} holds is an immediate consequence of Lemma \ref{defof} and the obvious identity
\[
(\cV_p\oplus \cW_p)\cap \cD^{2,q'}(\curl;\r3) = (\cV_p\cap \cD^{2,q'}(\curl;\r3))\oplus (\cW_p\cap \cD^{2,q'}(\curl;\r3)).
\]
If $v\in\cV$, then $\curlop\curlop v = -\Delta v\in L^{q'}(\r3,\r3)$. Hence $|\Delta v|_{q'} = |\curlop\curlop v|_{q'}$ and $\cV\subset\cD^{2,q'}(\r3,\r3)$.
\end{proof}

\begin{proof}[Beginning of the proof of Theorem \ref{thm:comparison}]
We shall show that 
\begin{equation} \label{sobolev2}
 S_{2,q',\curl} \ge S_{2,q'}. 
\end{equation}

It is well known that the Sobolev inequality \eqref{sobolev} remains valid, with the same
constant, for vector-valued functions. However, since we could not find a convenient reference, we provide a proof.
Let 
\begin{equation} \label{newtonian}
I_2(f)(x):=\frac1{4\pi}\int_{\r3}\frac{f(y)}{|x-y|}\,dy = (-\Delta)^{-1}(f) 
\end{equation} 
be the Newtonian potential of $f$ (i.e. the Riesz potential $I_\alpha(f)$ for $\alpha=2$, cf. \cite[Sec. V.1, eqs. (3)-(4)]{Stein}). 
Let
$v\in\cD^{2,q'}(\r3,\r3)$ and set  $f:=|\Delta v|$. Then $f\in L^{q'}(\r3)$ and the Hardy--Littlewood--Sobolev inequality \cite[Theorem V.1]{Stein}  implies
\(I_2(f)\in L^p(\r3)\). Since \(-\Delta\,I_2(f)=f\), it follows that
\(I_2(f)\in \cD^{2,q'}(\r3)\) and the  scalar Sobolev
inequality applies to it. Moreover, $v=I_2(-\Delta v)$.  
So by \eqref{newtonian}, 
\[
|v| \leq I_2(f)\quad\text{a.e. in }\r3.
\]
Applying the scalar Sobolev inequality \eqref{sobolev} to $I_2(f)$, we obtain 
\begin{eqnarray} \label{sobolev3}
\int_{\r3}|\Delta v|^{q'}\,dx & = & \int_{\r3}|f|^{q'}\,dx = \ir3|\Delta I_2(f)|^{q'}\,dx \\
&  \ge & S_{2,q'} \left(\int_{\r3}|I_2(f)|^p\,dx\right)^{q'/p}
\geq
S_{2,q'}\left(\int_{\r3}|v|^p\,dx\right)^{q'/p}. \nonumber
\end{eqnarray}
On the other hand, the Sobolev constant for vector-valued functions cannot be larger because the inequality must be satisfied for functions $v=(v_1,0,0)$. This proves the validity of \eqref{sobolev} for vector-valued functions.

 Now we proceed to \eqref{sobolev2}. Fix \(u\in\cD^{2,q'}(\curl;\r3)\). By Lemma \ref{Hdecomp}, write
\[
u=v+w_0,
\qquad v\in\cV,\quad w_0\in\cW.
\] 
Then 
\[
\inf_{\substack{w\in\cD^{2,q'}(\curl;\r3)\\ \curlop w=0}}
\left(\int_{\r3}|u+w|^p\,dx\right)^{q'/p}
\leq
\left(\int_{\r3}|u-w_0|^p\,dx\right)^{q'/p}
=
\left(\int_{\r3}|v|^p\,dx\right)^{q'/p}.
\]
Since $\curlop\curlop u=-\Delta v$,  combining \eqref{sobolev3} with the  inequality above shows that \eqref{Scurl} holds with
\(S_{2,q'}\) in place of \(S_{2,q',\curl}\). As
\(S_{2,q',\curl}\) is the largest admissible constant, it follows that $S_{2,q',\curl} \ge S_{2,q'}$. This fact will be needed in the proof of Theorem \ref{thm:main}. Subsequently, Theorem \ref{thm:main} will be used in order to complete the proof of Theorem \ref{thm:comparison} by showing that the inequality above is strict.
\end{proof}

\section{Concentration-compactness analysis} \label{sec:cca}

We denote the space of finite measures by $\cM(\R^3)$ and set
$$\|\mu\| := \sup_{\substack{u \in \cC_0(\mathbb{R}^3) \\ |u|_\infty = 1}} |\langle \mu, u \rangle|\quad\hbox{ for }\mu\in \cM(\R^3).$$
Recall that if $\mu\geq 0$, then $\|\mu\|=\langle \mu, 1 \rangle$, see e.g. \cite[Theorem 1.39]{Willem}. 

Let $u\in\cD^{2,q'}(\curl;\r3)$. By convexity and coercivity there exists a unique $w(u)$ such that
\begin{equation} \label{minimizer}
\ir3|u+w(u)|^p\,dx = \inf_{w\in\cW}\ir3|u+w|^p\,dx. 
\end{equation}
This implies 
\begin{equation} \label{eq:eqf}
\int_{\R^3}\langle |u+w|^{p-2}(u+w),\zeta\rangle\,dx  =0 \text{ for all } \zeta\in\cW\quad\text{if and only if }  w=w(u).
\end{equation}
Moreover, the mapping $u\mapsto w(u)$ is continuous in the $L^p$-topology, and hence also in the $\cD^{2,q'}(\curl;\r3)$-topology, see \cite[Lemma 2.3]{MedSz2} (the exponent $p$ here corresponds to $p^*$ in \cite{MedSz2}).

By Lemma \ref{Hdecomp}, $u+w(u)=v+w(v)\in \cV\oplus\cW$ for some $v\in\cV$. This implies that $S_{2,q',\curl}$ can be characterized as follows:
\begin{equation} \label{charact}
S_{2,q',\curl} = \inf_{\substack{u\in \cD^{2,q'}(\curl;\r3) \\ \curlop\curlop u\neq 0}} \frac{|\curlop\curlop u|_{q'}^{q'}}{|u+w(u)|_{p}^{q'}} = \inf_{v\in \cV\setminus\{0\}} \frac{|\curlop\curlop v|_{q'}^{q'}}{|v+w(v)|_{p}^{q'}}. 
\end{equation}
We set
$$\cM:=\big\{v+w(v):\; v\in\cV\big\}$$
and below we show that $\cM$ is compactly embedded in $L^r_{loc}(\R^3,\R^3)$ for any $1\leq  r<p$.
\begin{Lem} \label{Lem:WConcentration}
Suppose $v_n+w(v_n)\in\cM$,  $v_n\weakto v_0$ in $\cV$, $v_n\to v_0$ a.e. in $\R^3$, $|\Delta(v_n-v_0)|^{q'}\weakto \mu$ and $|v_n-v_0|^{p}\rh \rho$ in $\cM(\R^3)$. Then there exist an at most countable set $\{x_j:j\in I\}\subset\R^3$ and positive weights $\{\mu_j\}_{j\in I}$, $\{\rho_j\}_{j\in I}$ such that
\begin{equation} \label{measures}
\mu\geq\sum_{j\in I}\mu_j\delta_{x_j}, \quad \rho = \sum_{j\in I}\rho_j\delta_{x_j},
\end{equation}
and passing to a subsequence,
$w(v_n)\rh w(v_0)$ in $\cW$, $w(v_n)\to w(v_0)$ a.e. in $\R^3$ and in $L^r_{loc}(\R^3,\R^3)$ for any $1\leq  r<p$. In particular, $\cM$ is compactly embedded  in $L^r_{loc}(\R^3,\R^3)$ for any $1\leq  r<p$.
\end{Lem}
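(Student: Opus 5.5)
The proof has two parts: an atomic description of the limit measures, in the spirit of Lions' second concentration-compactness lemma, and then the local compactness of $w(v_n)$ away from the atoms.

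\emph{Step 1: the measures.} Set $z_n:=v_n-v_0\in\cV$, so $z_n\weakto 0$ in $\cV$ and $z_n\to 0$ a.e. For $\varphi\in\cC_0^\infty(\R^3)$, apply the vector-valued Sobolev inequality \eqref{sobolev3} to $\varphi z_n\in\cD^{2,q'}(\R^3,\R^3)$. This gives
\[
S_{2,q'}\Big(\ir3|\varphi|^p|z_n|^p\,dx\Big)^{q'/p}\le \ir3|\Delta(\varphi z_n)|^{q'}\,dx .
\]
Expand $\Delta(\varphi z_n)=\varphi\Delta z_n+2\nabla\varphi\cdot\nabla z_n+z_n\Delta\varphi$. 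The lower order terms tend to $0$ by Rellich, since $z_n$ is bounded in $W^{2,q'}_{loc}$ by Lemma \ref{Hdecomp} and the Calder\'on--Zygmund estimate. In the limit we obtain the reverse H\"older inequality
\[
S_{2,q'}\Big(\int|\varphi|^p\,d\rho\Big)^{q'/p}\le\int|\varphi|^{q'}\,d\mu .
\]
Since $q'<p$, the standard argument (Lions, cf. \cite{Willem}) shows that $\rho$ is purely atomic, $\rho=\sum_{j\in I}\rho_j\delta_{x_j}$ with $I$ at most countable. It also gives $\mu(\{x_j\})\ge S_{2,q'}\rho_j^{q'/p}=:\mu_j$, which yields \eqref{measures}. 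As a consequence, for every compact $K$ and $\eps>0$ there is a finite set $F\subset\{x_j\}$ such that
\[
\limsup_n\int_{K\setminus B_\delta(F)}|v_n-v_0|^p\,dx\le\eps \quad\text{for every }\delta>0 .
\]
Here $B_\delta(F)$ is the $\delta$-neighbourhood of $F$.

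\emph{Step 2: weak limit of $w_n:=w(v_n)$.} Since $|v_n+w_n|_p\le|v_n|_p$ by minimality, $(w_n)$ is bounded in $L^p$. On $\cW$ the norm reduces to the $L^p$ norm because $\curlop\curlop w=0$. So up to a subsequence $w_n\weakto \tilde w$ in $\cW$. By \cite[Lemma 1.1(i)]{Le} we may write $w_n=\nabla\xi_n$ with $\xi_n$ normalized to have zero mean on a fixed ball. Then $(\xi_n)$ is bounded in $W^{1,p}_{loc}$, hence, after a diagonal subsequence, Cauchy in $L^p_{loc}$.

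Put $u_n:=v_n+w_n$. Condition \eqref{eq:eqf} says that $\div(|u_n|^{p-2}u_n)=0$ in the sense of distributions, and $(|u_n|^{p-2}u_n)$ is bounded in $L^{p'}$. Fix $\varphi\in\cC_0^\infty(\R^3)$ with $\varphi\ge0$. Testing the equations for $u_n$ and $u_m$ with $\varphi(\xi_n-\xi_m)$ gives
\[
\int\varphi\big\langle|u_n|^{p-2}u_n-|u_m|^{p-2}u_m,\,u_n-u_m\big\rangle dx
=\int\varphi\big\langle\cdots,v_n-v_m\big\rangle dx-\int(\xi_n-\xi_m)\big\langle\cdots,\nabla\varphi\big\rangle dx .
\]
The last integral tends to $0$ because $(\xi_n)$ is Cauchy in $L^p_{loc}$. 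The first term on the right is bounded by
\[
C\Big(\int_{\supp\varphi}|v_n-v_m|^p\,dx\Big)^{1/p}.
\]
By Step 1 this is $\lesssim\eps^{1/p}$ for large $n,m$ provided $\supp\varphi$ avoids $B_\delta(F)$. Since $\langle|a|^{p-2}a-|b|^{p-2}b,a-b\rangle\gtrsim|a-b|^p$ for $p\ge2$, we get
\[
\limsup_{n,m}\int_{K\setminus B_{2\delta}(F)}|u_n-u_m|^p\,dx\lesssim\eps^{1/p}.
\]
The remaining set $B_{2\delta}(F)$ has measure $O(\delta^3)$. Hence $(u_n)$, and therefore $(w_n)$, is Cauchy in measure on every compact set. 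After extracting a subsequence, $w_n\to\tilde w$ a.e.

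\emph{Step 3: identification and conclusion.} From the a.e. convergence and the $L^{p'}$ bound we get $|u_n|^{p-2}u_n\weakto|v_0+\tilde w|^{p-2}(v_0+\tilde w)$ in $L^{p'}$. Passing to the limit in \eqref{eq:eqf} shows that $\tilde w$ satisfies the characterization of $w(v_0)$, so $\tilde w=w(v_0)$ by uniqueness. Vitali's theorem, using the $L^p$ bound and a.e. convergence, then gives $w_n\to w(v_0)$ in $L^r_{loc}$ for every $r<p$. Together with $v_n\to v_0$ in $L^r_{loc}$ from Rellich, this gives the compact embedding of $\cM$.

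The main obstacle is Step 2. The atoms may be countably many and even accumulate, so one cannot simply work on $\R^3\setminus\{x_j\}$. Obtaining convergence in measure through the $\eps$--$\delta$ splitting into finitely many atoms plus a small remainder, as above, is the point that needs care.
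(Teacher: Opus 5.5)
Your proof is correct and follows essentially the same route as the paper. You get the atomic structure of $\rho$ from the reverse H\"older inequality and the Lions--Struwe second concentration-compactness lemma, bound $w(v_n)$ by minimality, and then carry out the steps the paper delegates to \cite[Appendix B]{MedSz2} (a.e. convergence, $L^r_{loc}$ convergence, and the identification $w_0=w(v_0)$ through \eqref{eq:eqf}), using the same tools the paper employs in Proposition~\ref{conccomp}: the gradient test fields $\nabla(\vp\xi_n)$ from Le's lemma and strong monotonicity of $a\mapsto|a|^{p-2}a$. The only variations are minor: you need only Rellich compactness of the potentials $\xi_n$ in $L^p_{loc}$ rather than Morrey's uniform convergence, so the restriction $p>3$ is not used at that step, and you should add the routine remark (e.g.\ via Vitali) that the a.e.\ limit of $w(v_n)$ coincides with the weak limit $\tilde w$.
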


\begin{Rem} \label{remark}
\emph{
Note that if $v_n\rh v_0$ in $\cV$, then the assumptions of Lemma \ref{Lem:WConcentration} are necessarily satisfied after passing to a subsequence.
}
\end{Rem}

\begin{proof}[Sketch of proof of Lemma \ref{Lem:WConcentration}]
	The argument is a second-order counterpart of
	\cite[Theorem 2.4 and Appendix~B]{MedSz2}.  
	Existence of the set $I$, the weights $\mu_{x_j}, \rho_{x_j}$ and the relations \eqref{measures} follow directly from \cite[Concentration-Compactness Lemma II]{Struwe}.

	Since 
	$$\int_{\R^3}|v_n+w(v_n)|^{p}\,dx \le \int_{\R^3}|v_n|^{p},$$
	we see that $(w(v_n))$ is bounded.
	Thus, taking a subsequence we may assume $w(v_n)\weakto w_0$ in $\cW$ for some $w_0\in\cW$. The rest of the proof is exactly the same as in \cite[Appendix B]{MedSz2}: we show taking subsequences that $w(v_n)\to w_0$ a.e., then that $w(v_n)\to w_0$ in $L^r_{loc}(\r3,\r3)$ for $1\le r<p$, and finally, that $w_0=w(v_0)$. (We also note that as $p>3$, the case (B.2) and not (B.3) in \cite[Appendix B]{MedSz} is relevant here.)
	
\end{proof}

\begin{Prop}
\label{conccomp}
Let \((u_n)\subset\cD^{2,q'}(\curl;\r3)\) be a bounded Palais--Smale sequence for \(J\) such that
\[
u_n=v_n+w(v_n)\in\cV\oplus\cW
\]
(i.e. $u_n\in\cM$). Suppose that
\[
u_n\rightharpoonup u=v+w(v), \quad
u_n\to u \quad\text{a.e. in }\r3
\]
and
\[
|\curlop\curlop(u_n-u)|^{q'} = |\Delta(v_n-v)|^{q'}\rightharpoonup\mu,
\quad
|v_n-v|^p \rightharpoonup\rho,
\quad
|u_n-u|^p \rightharpoonup\nu
\]
in \(\cM(\r3)\).
Then 
there exist constants \(c,C,\wt C>0\), independent of \(j\), such that
\[
c\,\rho_j^{q'/p}\leq\mu_j\leq C\nu_j^{1/p'}\rho_j^{1/p}
\]
and
\begin{equation}\label{eq:nurho}
\nu_j\leq \wt C\rho_j
\end{equation}
where $\rho_j,\mu_j$ are as in Lemma \ref{Lem:WConcentration} and $\nu_j$ are the corresponding weights for $\nu$. 
In particular, there exists \(\rho_*>0\), independent of \(j\), such that
\[
\rho_j\geq\rho_*
\qquad\text{for every }j\in I.
\]
Hence the concentration set
\[
\Sigma:=\{x_j:j\in I\}
\]
is finite.
\end{Prop}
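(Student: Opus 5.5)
Throughout, fix $j\in I$ and localize at $x_j$ with a cut-off $\vp_\eps(x)=\vp((x-x_j)/\eps)$, where $\vp\in\cC_0^\infty(\r3)$, $0\le\vp\le1$, $\vp=1$ on $B_1$ and $\supp\vp\subset B_2$. We prove the three inequalities in turn and then count the atoms.

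\emph{Lower bound $c\rho_j^{q'/p}\le\mu_j$.} This is the standard Lions argument applied to $\vp_\eps(v_n-v)$. We use the vector Sobolev inequality \eqref{sobolev3}, which holds with constant $S_{2,q'}$. The commutator terms $\Delta(\vp_\eps(v_n-v))-\vp_\eps\Delta(v_n-v)$ contain at most first derivatives of $v_n-v$, multiplied by derivatives of $\vp_\eps$. These tend to zero in $L^{q'}$ as $n\to\infty$, by local compactness of $\cD^{2,q'}\hookrightarrow W^{1,q'}_{loc}$. Letting $n\to\infty$ and then $\eps\to0$ gives $\mu(\{x_j\})\ge S_{2,q'}\rho_j^{q'/p}$. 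This part is essentially already contained in Lemma \ref{Lem:WConcentration}.

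\emph{Upper bound $\mu_j\le C\nu_j^{1/p'}\rho_j^{1/p}$.} Here we use the Palais--Smale property and test $J'(u_n)$ with $\vp_\eps(v_n-v)$, which is a bounded sequence in $\cD^{2,q'}(\curl;\r3)$. Writing $A(\xi)=|\xi|^{q'-2}\xi$, we get
\[
\ir3\langle A(\curlop\curlop u_n),\curlop\curlop(\vp_\eps(v_n-v))\rangle\,dx=\ir3\langle|u_n|^{p-2}u_n,\vp_\eps(v_n-v)\rangle\,dx+o(1).
\]
On the left, replace $\curlop\curlop(\vp_\eps(v_n-v))$ by $\vp_\eps\curlop\curlop(v_n-v)$; the error is a commutator with lower-order terms, which is $o(1)$ by local compactness. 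Then split off the weak limit. The key technical issue is that $A$ is nonlinear, so $A(\curlop\curlop u_n)-A(\curlop\curlop u)$ does not decouple linearly. To handle it, use the elementary inequality $\langle A(a)-A(b),a-b\rangle\gtrsim |a-b|^2(|a|+|b|)^{q'-2}$ for $q'<2$, together with the Brezis--Lieb type splitting $\int\vp_\eps|\curlop\curlop u_n|^{q'}=\int\vp_\eps|\curlop\curlop u|^{q'}+\int\vp_\eps d\mu+o(1)$. Combining these, the left side is bounded below by $c\int\vp_\eps\,d\mu$ minus terms that vanish as $\eps\to0$. If this is too delicate, a fallback is to expand $\langle A(a),a-b\rangle\ge\frac1{q'}(|a|^{q'}-|b|^{q'})$ by convexity, which gives the same atom bound.

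On the right, $u_n=v_n+w(v_n)$, and by Lemma \ref{Lem:WConcentration} we have $w(v_n)\to w(v)$ in $L^r_{loc}$. Hence $|u_n|^{p-2}u_n-|u|^{p-2}u$ behaves locally like $|u_n-u|^{p-2}(u_n-u)$ up to $o(1)$, again by a Brezis--Lieb argument. H\"older's inequality then gives $\limsup\int\vp_\eps\langle|u_n-u|^{p-2}(u_n-u),v_n-v\rangle\le(\int\vp_\eps d\nu)^{1/p'}(\int\vp_\eps d\rho)^{1/p}$, and letting $\eps\to0$ yields $\mu_j\le C\nu_j^{1/p'}\rho_j^{1/p}$.

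\emph{Inequality \eqref{eq:nurho}.} Since $u_n-u=(v_n-v)+(w(v_n)-w(v))$ and $w(v_n)\to w(v)$ in $L^p_{loc}$ would give $\nu_j=\rho_j$ directly, but we only have convergence in $L^r_{loc}$ for $r<p$, this needs a different route. Instead, use the minimality \eqref{eq:eqf}, tested with the localized gradient parts. Concretely, apply \eqref{eq:eqf} with $\zeta=\nabla(\vp_\eps\psi_n)$, where $w(v_n)-w(v)=\nabla\psi_n$, to control the concentrated mass of $w(v_n)-w(v)$ by that of $v_n-v$. The $p$-growth then gives $\int\vp_\eps|u_n-u|^p\lesssim\int\vp_\eps|v_n-v|^p+o(1)$. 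I expect this to be the main obstacle, as in \cite[Appendix B]{MedSz2}.

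\emph{Conclusion.} Combining the three bounds gives
\[
c\rho_j^{q'/p}\le C\wt C^{1/p'}\rho_j,
\]
so $\rho_j^{1-q'/p}\ge\rho_*^{1-q'/p}>0$, using $q'<p$. Since $\sum_j\rho_j\le\|\rho\|<\infty$ and every $\rho_j\ge\rho_*$, the index set $I$ is finite, and hence $\Sigma$ is finite.
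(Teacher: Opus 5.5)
Your plan follows the paper's: Sobolev applied to $\vp_\eps(v_n-v)$ for the lower bound, $J'(u_n)$ tested with $\vp_\eps(v_n-v)$ for the upper bound, and \eqref{eq:eqf} tested with a localized gradient for \eqref{eq:nurho}. Two points need attention. First, in the upper bound you cannot use a Brezis--Lieb splitting of $\int\vp_\eps|\curlop\curlop u_n|^{q'}\,dx$. It needs $\curlop\curlop u_n\to\curlop\curlop u$ a.e., which is not known at this stage; it is \eqref{aeconv}, which is derived later \emph{from} this proposition. Your convexity fallback should be the main argument. Combined with $|\curlop\curlop(u_n-u)|^{q'}\le 2^{q'-1}(|\curlop\curlop u_n|^{q'}+|\curlop\curlop u|^{q'})$, it bounds the left side below by $c\int\vp_\eps|\curlop\curlop(u_n-u)|^{q'}\,dx-C\int\vp_\eps|\curlop\curlop u|^{q'}\,dx$. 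This is exactly what the paper obtains from \eqref{elementary}. On the right-hand side your Brezis--Lieb replacement is legitimate, but the reason is that $u_n\to u$ a.e. and $(u_n)$ is bounded in $L^p$, not Lemma \ref{Lem:WConcentration}.

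Second, and this is the genuine gap: \eqref{eq:nurho}, the only nonstandard step, is only announced. You name the right test field, but the argument that makes it work is missing. Three ingredients are needed.

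(a) Normalize $\psi_n$ to have zero mean on a ball $\Omega\supset\supp\vp$. Poincar\'e and Lemma \ref{Lem:WConcentration} with some $r\in(3,p)$ give $\psi_n\to0$ in $W^{1,r}(\Omega)$, hence uniformly on $\Omega$ by Morrey.

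(b) Write $\vp(u_n-u)=\vp(v_n-v)+\nabla(\vp\psi_n)-\psi_n\nabla\vp$. Since \eqref{eq:eqf} holds for both $u_n$ and $u$, the middle term pairs to zero with $|u_n|^{p-2}u_n-|u|^{p-2}u$. The last term contributes $o(1)$ by (a). Hence
\[
\int_{\mathbb{R}^3}\vp\langle|u_n|^{p-2}u_n-|u|^{p-2}u,u_n-u\rangle\,dx=\int_{\mathbb{R}^3}\vp\langle|u_n|^{p-2}u_n-|u|^{p-2}u,v_n-v\rangle\,dx+o(1).
\]

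(c) Bound the left side below by $C_1\int\vp|u_n-u|^p\,dx$, using strong monotonicity for $p\ge2$. Bound the right side above by H\"older, after discarding $\int\vp\langle|u|^{p-2}u,v_n-v\rangle\,dx\to0$ and applying Brezis--Lieb to $\int\vp|u_n|^p$. This yields \eqref{eqeq}. With $\vp=\vp_{j,\eps}$ and $\eps\to0$ it gives $C_1\nu_j\le\nu_j^{1/p'}\rho_j^{1/p}$, i.e.\ $\nu_j\le C_1^{-p}\rho_j$.

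Alternatively, apply your replacement $|u_n|^{p-2}u_n-|u|^{p-2}u=|u_n-u|^{p-2}(u_n-u)+o(1)$ in $L^{p'}$ to both sides of the identity in (b). Setting $X_n=\int\vp|u_n-u|^p$ and $Y_n=\int\vp|v_n-v|^p$, this gives $X_n\le X_n^{1/p'}Y_n^{1/p}+o(1)$, and hence even $\nu\le\rho$. Without (a)--(c), neither \eqref{eq:nurho} nor the uniform bound $\rho_j\ge\rho_*$ is established.
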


\begin{proof}
Note that by Lemma \ref{Lem:WConcentration} $u\in \cM$, so indeed $u = v+w(v)$ for some $v\in\cV$. Set
\[
z_n:=v_n-v,
\qquad
q_n:=w(v_n)-w(v).
\]
Clearly
\(z_n\rightharpoonup0\) in \(\cD^{2,q'}(\r3,\r3)\)
and
\[
\curlop\curlop(u_n-u)
=\curlop\curlop z_n
=-\Delta z_n.
\]
Since $(z_n)$ is bounded in
$\cD^{2,q'}(\r3,\r3)$ and
$\curlop\curlop z_n=-\Delta z_n$, if
\(K\Subset K'\Subset\r3\), then the local Calder\'on--Zygmund estimate
(see \cite[Theorem 3]{DiFrattaFiorenza} and
\cite[Theorem 9.11]{GilbargTrudinger}) gives
\[
\|z_n\|_{W^{2,q'}(K,\r3)}
\leq C_{K,K'}
\left(
|\Delta z_n|_{L^{q'}(K',\r3)}+
|z_n|_{L^{q'}(K',\r3)}
\right).
\]
Since $p>q'$, the right-hand side is bounded. The Rellich-Kondrachov compactness theorem and
 weak convergence of \(z_n\) therefore give, after passing to a
subsequence,
\begin{equation}\label{eq:localzn}
z_n\to0
\quad\text{in }W^{1,s}_{loc}(\r3,\r3)
\quad\text{for }1\leq s<\frac{3q'}{3-q'}
\end{equation}
and hence a.e. in $\r3$ and in $L^r_{loc}(\r3,\r3)$ for every
$1\leq r<p$.

For every fixed $\vp\in\cC_0^\infty(\r3)$, the second-order
Sobolev inequality gives
\[
S_{2,q'}
\left(
\int_{\r3}|\vp z_n|^p\,dx
\right)^{q'/p}
\leq
\int_{\r3}|\Delta(\vp z_n)|^{q'}\,dx.
\]
Writing
\[
\Delta(\vp z_n)=\vp\Delta z_n+r_n
\quad \text{where} \quad
r_n:=2\nabla\vp\cdot\nabla z_n+(\Delta\vp)z_n,
\]
we have $r_n\to0$ in $L^{q'}(\r3,\r3)$. Hence
\[
\int_{\r3}|\Delta(\vp z_n)|^{q'}\,dx
=
\int_{\r3}|\vp|^{q'}|\Delta z_n|^{q'}\,dx+o(1),
\]
and consequently
\[
S_{2,q'}
\left(
\int_{\r3}|\vp|^p\,d\rho
\right)^{q'/p}
\leq
\int_{\r3}|\vp|^{q'}\,d\mu.
\]
By \cite[Concentration--Compactness Lemma II]{Struwe},
\begin{equation}\label{eq:Sestrhomi}
S_{2,q'}\rho_j^{q'/p}\leq\mu_j
\qquad\text{for every }j\in I.
\end{equation}
We next compare the \(L^p\)-concentration of \(u_n-u\) and \(z_n\). Let
$\Omega$ be a bounded domain and let $\vp\in \cC_0^\infty(\Omega )$ be a nonnegative function. Fix $r\in(3,p)$. Since \(q_n\in \cW\), by \cite[Lemma 1.1]{Le} there exists
\[
\xi_n\in W^{1,p}_{loc}(\r3)
\]
such that
$q_n=\nabla\xi_n$,
and we may assume without loss of generality that $\io\xi_n\,dx = 0$. By the Poincar\'e inequality and Lemma \ref{Lem:WConcentration},
\begin{equation} \label{poincare}
\|\xi_n\|_{W^{1,r}(\Omega)} \le C_\Omega|\nabla\xi_n|_{L^r(\Omega,\r3)} = C_\Omega|q_n|_{L^r(\Omega,\r3)} \to 0.
\end{equation}
By Morrey's embedding theorem we get $\xi_n\to 0$ in $L^\infty(\Omega)$.

 Let $\psi_n := \nabla(\vp\xi_n) = \vp q_n + \xi_n\nabla\vp$. Then $\psi_n\in \cW$ and $(\psi_n)$ is uniformly bounded in $\cW$. As $u_n,u\in\cM$, we have 
\begin{equation} \label{eqq}
\ir3\langle |u_n|^{p-2}u_n-|u|^{p-2}u, \psi_n\rangle\,dx = 0
\end{equation}
according to \eqref{eq:eqf}.
Since $z_n\rh 0$ in $L^p(\r3,\r3)$,
\begin{equation} \label{eqq2}
\ir3\vp\langle |u|^{p-2}u, z_n\rangle\,dx \to 0.
\end{equation}
Using
\[
\vp(u_n-u)
=\vp z_n+\psi_n-\xi_n\nabla\vp,
\]
\eqref{eqq}, \eqref{eqq2} and H\"older's inequality, we obtain
\[
\begin{aligned}
&\int_{\r3}\vp\left\langle
|u_n|^{p-2}u_n-|u|^{p-2}u,u_n-u \right\rangle dx = \ir3 \left\langle
|u_n|^{p-2}u_n-|u|^{p-2}u, \vp z_n + \psi_n - \xi_n\nabla\vp \right\rangle dx
\\
&\qquad =  \ir3 \left\langle \vp |u_n|^{p-2}u_n, z_n \right\rangle dx + o(1) \leq
\left(\int_{\r3}\vp|u_n|^p\,dx\right)^{(p-1)/p}
\left(\int_{\r3}\vp|z_n|^p\,dx\right)^{1/p}
+o(1).
\end{aligned}
\]
In view of the Brezis-Lieb lemma we obtain
\begin{eqnarray*}
	&&\ir3\vp\langle |u_n|^{p-2}u_n - |u|^{p-2}u, u_n-u\rangle\,dx \\
	& \le & \Big(\ir3\vp |u_n-u|^{p}\,dx+\ir3\vp |u|^{p}\,dx\Big)^{(p-1)/p}\Big(\ir3\vp|z_n|^p\,dx\Big)^{1/p} + o(1).
\end{eqnarray*}
Since
\[
C_1|u_n-u|^p \le \langle |u_n|^{p-2}u_n - |u|^{p-2}u, u_n-u\rangle
\]
for some $C_1>0$ (see  e.g. \cite{Simon}), we have 
\[
C_1\ir3\vp|u_n-u|^p\,dx \le \Big(\ir3\vp|u_n-u|^p\,dx+\ir3\vp |u|^{p}\,dx\Big)^{(p-1)/p}\Big(\ir3\vp|z_n|^p\,dx\Big)^{1/p} + o(1).
\]
Passing to the limit, we obtain 
\begin{equation} \label{eqeq}
C_1\ir3\vp\,d\nu\le \Big(\ir3\vp\,d\nu+\ir3\vp |u|^{p}\,dx\Big)^{(p-1)/p}\Big(\ir3\vp\,d\rho\Big)^{1/p}.
\end{equation}
Fix \(j\in I\) and let \(\vp_{j,\varepsilon}\in\cC_0^\infty(\r3,[0,1])\) satisfy
\begin{equation} \label{phije}
\vp_{j,\varepsilon}=1\ \text{ on }B_\varepsilon(x_j),
\qquad
\operatorname{supp}(\vp_{j,\varepsilon})
\subset B_{2\varepsilon}(x_j).
\end{equation}
Applying the preceding inequality with \(\vp=\vp_{j,\varepsilon}\)
and then letting \(\varepsilon\to0\),  we obtain  $0\le\nu_j\le\wt C\rho_j$ for some $\wt C>0$ and all $j$, hence \eqref{eq:nurho} holds.

For \(s>1\), put \(A_s(a):=|a|^{s-2}a\), and set 
\[
f_n:=A_{q'}(\curlop\curlop u_n) = A_{q'}(\curlop\curlop v_n).
\]
The sequence \((f_n)\) is bounded in \(L^q(\r3,\r3)\). 
For every fixed $\vp\in\cC_0^\infty(\r3,[0,1])$, in view of
\eqref{eq:localzn},
\begin{equation}\label{eq:etaxi}
\begin{aligned}
R_n
&:=\curlop\curlop(\vp z_n)
-\vp\curlop\curlop z_n
\\
&=\curlop(\nabla\vp\times z_n)
+\nabla\vp\times\curlop z_n
\to 0
\quad\text{in }L^{q'}(\r3,\r3).
\end{aligned}
\end{equation}
The sequence \((\vp z_n)\) is bounded in
\(\cD^{2,q'}(\curl;\r3)\). Since $(u_n)$ is a Palais--Smale sequence, $J'(u_n)(\vp z_n)\to 0$  and by
\eqref{eq:etaxi} we obtain 
\begin{equation}\label{eq:etaxi2}
\int_{\r3}\vp
\left\langle f_n,\curlop\curlop z_n\right\rangle dx
=
\int_{\r3}\vp
\left\langle |u_n|^{p-2}u_n,z_n\right\rangle dx
+o(1).
\end{equation} 
The following elementary estimate holds:
\begin{equation} \label{elementary}
|a|^{q'}\lesssim
\left\langle A_{q'}(a+b),a\right\rangle
+|b|^{q'},
\qquad a,b\in\r3.
\end{equation}
Indeed,
\[
|a+b|^{q'}
=
\left\langle A_{q'}(a+b),a\right\rangle
+
\left\langle A_{q'}(a+b),b\right\rangle,
\]
and Young's inequality gives
\[
\left|
\left\langle A_{q'}(a+b),b\right\rangle
\right|
\leq
\frac12|a+b|^{q'}
+
C |b|^{q'}
\]
for a constant $C>0$. Now the inequality $|a|^{q'}\leq2^{q'-1}(|a+b|^{q'}+|b|^{q'})$
yields the claim.
We apply this inequality with
\[
a=\curlop\curlop z_n,
\qquad
b=\curlop\curlop v
\] 
and use  \eqref{eq:etaxi2} to obtain
\[
\int_{\r3}\vp 
|\curlop\curlop z_n|^{q'}\,dx
\lesssim
\int_{\r3} \vp|u_n|^{p-1}|z_n|\,dx+\int_{\r3} \vp|\curlop\curlop v|^{q'}\,dx+o(1).
\]
Since \(u_n=(u_n-u)+u\), the triangle and the H\"older inequalities give
\[
\begin{aligned}
\int_{\r3}\vp
|u_n|^{p-1}|z_n|\,dx
&\lesssim\left(\int_{\r3}\vp|u_n-u|^p\,dx\right)^{1/p'}\left(\int_{\r3}\vp|z_n|^p\,dx\right)^{1/p}
\\
&\quad+\left(
\int_{\r3}\vp|u|^p\,dx
\right)^{1/p'}
\left(
\int_{\r3}\vp|z_n|^p\,dx
\right)^{1/p}.
\end{aligned}
\]
Fix \(j\in I\) and let $\vp_{j,\eps}$ be as in \eqref{phije}. For each fixed \(\varepsilon>0\) we apply the preceding two inequalities with
\(\vp=\vp_{j,\varepsilon}\) and  let \(n\to\infty\). We obtain
\[
\begin{aligned}
\int_{\r3}\vp_{j,\varepsilon}\,d\mu
&\lesssim
\left(
\int_{\r3}\vp_{j,\varepsilon}\,d\nu\right)^{1/p'}
\left(
\int_{\r3}\vp_{j,\varepsilon}\,d\rho
\right)^{1/p}
\\
&\quad+\left(
\int_{\r3}\vp_{j,\varepsilon}|u|^p\,dx
\right)^{1/p'}
\left(
\int_{\r3}\vp_{j,\varepsilon}\,d\rho
\right)^{1/p}
\\
&\quad+\int_{\r3}\vp_{j,\varepsilon}
|\curlop\curlop v|^{q'}\,dx.
\end{aligned}
\]
Now let \(\varepsilon\to0\). Then
\[
\int_{\r3}\vp_{j,\varepsilon}\,d\mu\to\mu_j,
\qquad
\int_{\r3}\vp_{j,\varepsilon}\,d\nu\to\nu_j,
\qquad
\int_{\r3}\vp_{j,\varepsilon}\,d\rho\to\rho_j
\]
and
\[
\int_{\r3}\vp_{j,\varepsilon}|u|^p\,dx\to0,
\qquad
\int_{\r3}\vp_{j,\varepsilon}
|\curlop\curlop v|^{q'}\,dx\to0.
\]
Hence
\[
\mu_j\leq C\nu_j^{1/p'}\rho_j^{1/p}
\]
for some $C>0$. Finally, combining the measure estimates with \eqref{eq:Sestrhomi} and \eqref{eq:nurho}, we find that
\[
S_{2,q'}\rho_j^{q'/p}
\le\mu_j
\le C\nu_j^{1/p'}\rho_j^{1/p}
\le D\rho_j.
\]
Recalling that $\rho_j>0$, we have
\[
\rho_j
\ge
\left(\frac{S_{2,q'}}{D}\right)^{\frac{p}{p-q'}}
=:\rho_*>0.
\]
Since
\(\|\rho\| = \sum_{j\in I}\rho_j < \infty\),
the set $I$, and hence \(\Sigma=\{x_j:j\in I\}\), must be finite.
\end{proof}

\begin{Rem}
\emph{
For every nonnegative $\vp\in\cC_0^\infty(\r3\setminus\Sigma)$,
we have $\ir3\vp\,d\rho=0$. As
\eqref{eqeq}  gives $\ir3\vp\,d\nu=0$,
 $\nu$ is supported on $\Sigma$ and
$\nu=\sum_{j\in I}\nu_j\delta_{x_j}$. }
\end{Rem}

\section{Proofs of Theorems \ref{thm:main}-\ref{thm:comparison}} \label{sec:proofs}

\begin{altproof}{Theorem \ref{thm:main}}
For \(v\in\cV\), let \(m(v):=v+w(v)\). By \eqref{charact},
\begin{equation}\label{eq:S-reduced}
S_{2,q',\curl}
=
\inf_{v\in\cV\setminus\{0\}}
\frac{\displaystyle\int_{\r3}|\curlop\curlop v|^{q'}\,dx}
{\displaystyle\left(\int_{\r3}|m(v)|^p\,dx\right)^{q'/p}}.
\end{equation}
By the estimate proved at the end of Section 2,
\(S_{2,q',\curl}\geq S_{2,q'}>0\).
We first construct a constrained Palais--Smale sequence. As we have already noted, the map
\(v\mapsto m(v)\) is continuous. 
Using the argument of the proof of property (ii) on p. 4320 in \cite{BartschMederski2} we see that the map $\Psi: v\mapsto\int_{\r3}|m(v)|^p\,dx$ is of class $\cC^1$, with the derivative given by 
\[
\Psi'(v)(\phi)
=p\int_{\r3}
\langle |m(v)|^{p-2}m(v),\phi\rangle\,dx,
\qquad \phi\in\cV.
\]
 Uniqueness of the minimizer in \eqref{minimizer} also gives
\(m(tv)=tm(v)\) for \(t>0\), see \cite[Lemma 4.2]{MedSz}. 
Since $\Psi'(v)(v)=p\Psi(v)>0$ for $v\ne0$,
\[
\mathcal C:=\left\{v\in\cV:
\int_{\r3}|m(v)|^p\,dx=1\right\}
\]
is a nonempty closed, and hence complete,
\(\cC^1\)-submanifold of \(\cV\) and we may assume that $v$ in \eqref{eq:S-reduced} is in $\cC$. Applying Ekeland's variational
principle to
\[
v\longmapsto\int_{\r3}|\curlop\curlop v|^{q'}\,dx
\]
on the constraint \(\mathcal C\), we obtain a sequence \((v_n)\subset\mathcal C\) and numbers \(c_n\in\R\),
\(\varepsilon_n\to0\)
such that
\[
\int_{\r3}|\curlop\curlop v_n|^{q'}\,dx\longrightarrow S_{2,q',\curl}
\]
and
\begin{equation}\label{eq:constrained-PS}
\begin{aligned}
\bigg|&\int_{\r3}
\big\langle
|\curlop\curlop v_n|^{q'-2}\curlop\curlop v_n,
\curlop\curlop\phi
\big\rangle\,dx\\
&-c_n\int_{\r3}
\langle |m(v_n)|^{p-2}m(v_n),\phi\rangle\,dx\bigg|
\leq\varepsilon_n\|\phi\|_{\cV},
\end{aligned}
\end{equation}
for every \(\phi\in\cV\) \cite[Theorem 3.1 and Corollary 3.4]{Ekeland}, see also \cite[Theorem 8.5]{Willem}.  By \eqref{eq:eqf},
\[
\int_{\r3}
\langle |m(v_n)|^{p-2}m(v_n),v_n\rangle\,dx = \int_{\r3}
\langle |m(v_n)|^{p-2}m(v_n),m(v_n)\rangle\,dx
=\int_{\r3}|m(v_n)|^p\,dx=1.
\]
Therefore testing \eqref{eq:constrained-PS} with \(v_n\) yields
\[
c_n=\int_{\r3}|\curlop\curlop v_n|^{q'}\,dx+o(1)
\to S_{2,q',\curl}.
\]
For $c_n>0$, put
\[
t_n:=c_n^{1/(p-q')},\qquad
\widehat v_n:=t_nv_n,\qquad
\widehat u_n:=m(\widehat v_n)=t_nm(v_n).
\]
The last identity follows from the homogeneity of \(m\) noted above.
For \(\phi\in\cV\), \eqref{eq:constrained-PS} gives
\[
\big|J'(\widehat u_n)(\phi)\big|
\leq t_n^{q'-1}\varepsilon_n\|\phi\|_{\cV} 
\]
whereas \(J'(\widehat u_n)(\psi)=0\) for every \(\psi\in\cW\) by
\eqref{eq:eqf}. Hence
\[
J'(\widehat u_n)\to 0
\quad\text{in }\big(\cD^{2,q'}(\curl;\r3)\big)^*.
\]
Furthermore,
\begin{equation}\label{eq:PS-masses}
\int_{\r3}|\curlop\curlop\widehat u_n|^{q'}\,dx\to S_{2,q',\curl}^{p/(p-q')}\quad\hbox{ and }\quad
\int_{\r3}|\widehat u_n|^p\,dx\to S_{2,q',\curl}^{p/(p-q')}.
\end{equation}
Thus
\begin{equation}\label{eq:ground-level}
J(\widehat u_n)\longrightarrow
c:=
\left(\frac1{q'}-\frac1p\right)S_{2,q',\curl}^{p/(p-q')}=\frac23S_{2,q',\curl}^{p/(p-q')}.
\end{equation}
Next we show that a subsequence $(\wh u_n)$, after a suitable translation and dilation, goes weakly to some $\wh u\ne 0$. 
Since \(\widehat v_n\in\cV\), Lemma \ref{Hdecomp} and the Sobolev inequality give
\[
|\nabla\widehat v_n|_{\frac{3q'}{3-q'}}
\leq C|\Delta\widehat v_n|_{q'}
=C|\curlop\curlop\widehat v_n|_{q'}.
\]
Thus \((\widehat v_n)\) is bounded in
\(\cD^{1,\frac{3q'}{3-q'}}(\r3,\r3)\). Notice that
\[
1<\frac{3q'}{3-q'}<3
\quad \text{and} \quad
\left(\frac{3q'}{3-q'}\right)^*
=\frac{3q'}{3-2q'}=p.
\]
Moreover, the minimizing property of \(w(\widehat v_n)\) and
\eqref{eq:PS-masses} imply
\[
|\widehat v_n|_p^p
\geq|m(\widehat v_n)|_p^p
=|\widehat u_n|_p^p\to S_{2,q',\curl}^{p/(p-q')}>0.
\]
For \(s>0\) and \(y\in\r3\), let
\[
(\tau_{s,y}u)(x):=s^{3/p}u(sx+y).
\]
It is easy to see that \(\tau_{s,y}\) preserves the \(L^p\)-norm of $u$, and using the relation \(3/p+2=3/q'\), also the \(L^{q'}\)-norm of
	\(\curlop\curlop u\) (so $\tau_{s,y}$ is an isometry). It also leaves the spaces \(\cV\) and \(\cW\) invariant.
	The uniqueness of \(w(v)\) therefore yields
	\[
	m(\tau_{s,y}v)=\tau_{s,y}m(v).
	\]
	Since \(|\widehat v_n|_p\geq|\widehat u_n|_p\), $\wh v_n\not\to 0$ in $L^p(\r3,\r3)$.
Hence, passing to a subsequence and applying Solimini's theorem \cite[Theorem~1]{Solimini} 
(cf. \cite[Lemma 3.2]{MedSz2}) we find \(s_n>0\) and \(y_n\in\r3\) such that $\tau_{s_n,y_n}\wh v_n\rh v\ne 0$. Thus $\tau_{s_n,y_n}\wh u_n\rh u\ne 0$. Observe that the result in \cite{Solimini} is for first order Sobolev spaces, therefore we needed boundedness of $\wh v_n$ in $\cD^{1,\frac{3q'}{3-q'}}(\r3,\r3)$ and the identity $p = \frac{3q'}{3-q'}$.
Since the problem is invariant with respect to the action of $\tau_{s,y}$, we may assume without loss of generality that $s_n=1$, $y_n=0$. For notational convenience we relabel \(\wh u_n\) and \(\wh v_n\) as
\(u_n\) and \(v_n\). This is a bounded Palais--Smale sequence
and \eqref{eq:PS-masses}, \eqref{eq:ground-level} are valid for $u_n$. 
Passing to a subsequence again and using Lemma
\ref{Lem:WConcentration} we obtain
\[
u_n=v_n+w(v_n)\rightharpoonup u:=v+w(v),
\qquad
u_n\to u\quad\text{a.e. in }\r3.
\]
After passing to a further
subsequence, all assumptions of Proposition
\ref{conccomp} are satisfied.

It remains to show that \(u\) is a critical point of \(J\) and a minimizer for \eqref{eq:S-reduced}. Recall
$z_n=v_n-v$
and let \(\Sigma\) be the finite concentration set given in
Proposition \ref{conccomp}. Since
\[
|z_n|^p\rh
\sum_{x_j\in\Sigma}\rho_j\delta_{x_j} \qquad \text{in } \cM(\r3)
\]
and the set $\Sigma$ is finite, it is easy to see that
\begin{equation}\label{eq:z-strong-away}
z_n\to0
\quad\text{in }L^p(K')
\quad\text{for every }K'\Subset\r3\setminus\Sigma.
\end{equation}
Choose a compact set  $K\Subset\r3\setminus\Sigma$ and take
\(\eta\in\cC_0^\infty(\r3,[0,1])\) with
\(\eta=1\) on \(K\) and
\(\operatorname{supp}\eta\cap\Sigma=~\varnothing\).
Then \eqref{eq:etaxi2} and \eqref{eq:z-strong-away} applied on $\operatorname{supp}\eta$ give
\[
\int_{\r3}\eta
\big\langle
|\curlop\curlop u_n|^{q'-2}\curlop\curlop u_n,
\curlop\curlop z_n
\big\rangle\,dx
=
\int_{\r3}\eta
\langle |u_n|^{p-2}u_n,z_n\rangle\,dx+o(1)
=o(1).
\]
Since
\(\curlop\curlop z_n=\curlop\curlop(u_n-u)\rightharpoonup0\)
in \(L^{q'}(\R^3,\R^3)\),
\[
\int_{\r3}\eta
\big\langle
|\curlop\curlop u|^{q'-2}\curlop\curlop u,
\curlop\curlop z_n
\big\rangle\,dx=o(1).
\]
Consequently,
\begin{equation} \label{tozero}
\ir3 \eta\big\langle
|\curlop\curlop u_n|^{q'-2}\curlop\curlop u_n - |\curlop\curlop u|^{q'-2}\curlop\curlop u, \curlop\curlop(u_n-u)\big\rangle dx
\end{equation}
tends to 0.
Since \(1<q'<2\), 
\[
\big\langle |a|^{q'-2}a-|b|^{q'-2}b,a-b\big\rangle
\geq c_{q'}\frac{|a-b|^2}{(|a|+|b|)^{2-q'}},
\]
see \cite{Simon}. 
The quotient is defined as zero when $a=b=0$.
Applying this inequality with
\(a=\curlop\curlop u_n\) and \(b=\curlop\curlop u\)
and using \eqref{tozero}, we obtain
\begin{equation} \label{q'<2}
\int_K
\frac{|\curlop\curlop u_n-\curlop\curlop u|^2}
{\big(|\curlop\curlop u_n|+|\curlop\curlop u|\big)^{2-q'}}
\,dx
\longrightarrow 0.
\end{equation}
Moreover, by H\"older's inequality,
\[
\begin{aligned}
\int_K |\curlop\curlop u_n-\curlop\curlop u|^{q'}\,dx
&\leq
\left(
\int_K
\frac{|\curlop\curlop u_n-\curlop\curlop u|^2}
{\big(|\curlop\curlop u_n|+|\curlop\curlop u|\big)^{2-q'}}
\,dx
\right)^{q'/2}
\\
&\quad\times
\left(
\int_K
\big(|\curlop\curlop u_n|+|\curlop\curlop u|\big)^{q'}\,dx
\right)^{(2-q')/2}.
\end{aligned}
\]
The second factor on the right-hand side is bounded since
\((\curlop\curlop u_n)\) is bounded in
\(L^{q'}(\r3,\r3)\). Hence
\[
\curlop\curlop u_n\to\curlop\curlop u
\quad\text{in }L^{q'}(K).
\]
As \(\Sigma\) is finite, using the diagonal argument on 
\[
\bigcup_{m=1}^\infty K_m \quad \text{where} \quad K_m := \ol{B(0,m)} \setminus \bigcup_{j\in I}B(x_j,1/m)
\]
 and passing to  subsequences,
we obtain
\begin{equation} \label{aeconv}
\curlop\curlop u_n\to\curlop\curlop u
\quad\text{a.e. in }\r3.
\end{equation}
Almost everywhere convergence and boundedness imply
$$
|\curlop\curlop u_n|^{q'-2}\curlop\curlop u_n
\rightharpoonup
|\curlop\curlop u|^{q'-2}\curlop\curlop u
\quad\text{in }L^{q}(\R^3,\R^3)$$
and
$$|u_n|^{p-2}u_n\rightharpoonup |u|^{p-2}u
\quad\text{in }L^{p'}(\R^3,\R^3)$$
after taking a further subsequence.
Passing to the limit in \(J'(u_n)\to0\), we conclude that
\[
J'(u)=0.
\]
Now observe that testing \(J'(u)=0\) with \(u\)
	gives
	\[
	\int_{\r3}|\curlop\curlop u|^{q'}\,dx
	=\int_{\r3}|u|^p\,dx.
	\]
	Moreover, \(u=m(v)\), and hence \eqref{eq:S-reduced} implies
	\[
	S_{2,q',\curl}
	\leq
	\left(\int_{\r3}|u|^p\,dx\right)^{1-q'/p}.
	\]
	Consequently,
	\[
	\int_{\r3}|u|^p\,dx
	\geq S_{2,q',\curl}^{p/(p-q')}.
	\]
	On the other hand,  almost everywhere convergence, Fatou's lemma,
	and \eqref{eq:PS-masses} give
	\[
	\int_{\r3}|u|^p\,dx
	\leq\liminf_{n\to\infty}\int_{\r3}|u_n|^p\,dx
	=S_{2,q',\curl}^{p/(p-q')}.
	\]
	Thus
	\[
	\int_{\r3}|\curlop\curlop u|^{q'}\,dx
	=\int_{\r3}|u|^p\,dx=S_{2,q',\curl}^{p/(p-q')}.
	\]
	Hence we infer that  \(u\)
is a minimizer.
Finally, let $\widetilde u\neq0$ be any critical point of $J$.
Then $\widetilde u=m(\widetilde v)$ for some
$\widetilde v\in\cV\setminus\{0\}$ and
\[
\int_{\r3}|\curlop\curlop\widetilde u|^{q'}\,dx
=
\int_{\r3}|\widetilde u|^p\,dx.
\]
Hence, by \eqref{eq:S-reduced},
\[
\int_{\r3}|\widetilde u|^p\,dx
\ge
S_{2,q',\curl}^{p/(p-q')},
\]
and therefore
\[
J(\widetilde u)
\ge
\left(\frac1{q'}-\frac1p\right)
S_{2,q',\curl}^{p/(p-q')}
=
J(u).
\]
Thus $u$ is a ground state.
\end{altproof}

\begin{proof}[Completion of the proof of Theorem \ref{thm:comparison}]
We have already shown that $S_{2,q',\curl} \ge S_{2,q'}$. It remains to prove that this inequality is
strict. Let \(u=m(v)\) be the optimizer obtained above. Suppose to the
contrary that
\[
S_{2,q',\curl}=S_{2,q'}.
\]
Since 
\(|m(v)|_p\leq|v|_p\), we have
\[
S_{2,q'}
=\frac{|\Delta v|_{q'}^{q'}}{|m(v)|_p^{q'}}
\geq\frac{|\Delta v|_{q'}^{q'}}{|v|_p^{q'}}
\geq S_{2,q'}.
\]
Thus both inequalities are equalities and \(v\neq0\) is an optimizer
of the usual vector-valued second-order Sobolev inequality.

Recall from the first part of the proof that $f = |\Delta v|$, $I_2(f)\in \cD^{2,q'}(\r3)$, $|v|\le I_2(f)$ a.e. and $v=I_2(-\Delta v)$. Let $g := -\Delta v$. Then
\[
|v| = |I_2(g)| \leq I_2(f)\quad\text{a.e. in }\r3
\]
which yields
\[
|v|_p \leq \big|I_2(f)\big|_p \leq S_{2,q'}^{-1/q'}|f|_{q'}=|v|_p
\]
(cf. \eqref{sobolev3}).
Therefore
\[
 |I_2(g)(x)|=I_2(f)(x)
\quad\text{ a.e. in }\r3.
\]
Since $v\ne0$, also $f\ne0$, and hence $I_2(f)(x)\ge 0$ a.e. and $\not\equiv 0$. Choose a point $x_0$ such that $I_2(f)(x_0)$ is positive and finite and let
\[
e_0:=\frac{I_2(g)(x_0)}{|I_2(g)(x_0)|}.
\]
Note that once $x_0$ has been chosen, $e_0$ is fixed. Taking inner product with $I_2(g)(x_0)$ gives
\[
\begin{aligned}
0
&= |I_2(g)(x_0)|-e_0\cdot I_2(g)(x_0) =  I_2(f)(x_0)-e_0\cdot I_2(g)(x_0)\\
&=\frac1{4\pi}\int_{\r3}
\frac{|g(y)|-e_0\cdot g(y)}{|x_0-y|}\,dy.
\end{aligned}
\]
As the integrand is nonnegative, we obtain
\[
g(y)=|g(y)|e_0\quad\text{for a.e. }y\in\r3,
\]
and consequently,
\[
v=I_2(g) = I_2(f)e_0.
\]
After rotating the coordinate system we may assume $e_0=(1,0,0)$.  
Since \(v\in\cV\), we have
\[
0=\div v= \frac{\partial}{\partial x_1}\,I_2(f).
\]
As \(I_2(f)\in L^p(\r3)\), this implies $v=0$, a contradiction. Thus
\[
S_{2,q',\curl}>S_{2,q'}.
\]
\end{proof}

\begin{altproof}{Theorem \ref{p=q}}
	We have $p=q=6$. Multiplying the first equation in \eqref{system} by $u$ and the second by $v$ and integrating gives
	\[
	\ir3|\curlop u|^2\,dx = \ir3\langle|v|^4v,u\rangle\,dx \le |v|_6^5|u|_6
	\]
	and
	\[
	\ir3|\curlop v|^2\,dx = \ir3\langle|u|^4u,v\rangle\,dx \le |u|_6^5|v|_6
	\]
	while multiplying the first equation by $v$ and the second by $u$ leads to
	\[
	\ir3\langle\curlop u,\curlop v\rangle\,dx = |u|_6^6 = |v|_6^6.
	\]
	Hence
	\[
	\ir3|\curlop u|^2\,dx + \ir3|\curlop v|^2\,dx \le 2|u|_6^6 = 2|v|_6^6 = 2\ir3\langle\curlop u,\curlop v\rangle\,dx,
	\]
so $\ir3(|\curlop u-\curlop v|)^2\,dx \le 0$ and $\curlop u = \curlop v$ a.e.
 But then, multiplying the first and the second equation in \eqref{system} by $\vp\in\cC_0^\infty(\r3,\r3)$ gives
	\[
	\ir3|v|^4\langle v,\vp\rangle\,dx = \ir3\langle\curlop u,\curlop\vp\rangle\,dx = \ir3\langle\curlop v,\curlop\vp\rangle\,dx =  \ir3|u|^4\langle u,\vp\rangle\,dx.
	\]
	Since $\vp$ is arbitrary, $|u|^4u=|v|^4v$ and thus $u=v$ a.e.
\end{altproof}

\appendix

\section{Concentration-compactness for the classical Lane--Emden system}
\label{app:scalar-compactness}

The concentration argument used above also applies to the scalar $q'$-bilaplacian.
We give a local statement which requires neither symmetry nor a prescribed
Palais--Smale level. 

Throughout the appendix, let $N\geq3$ and
\[
 p,q>\frac{N}{N-2},\qquad \frac1p+\frac1q=\frac{N-2}{N}.
\]
As before, $q'=q/(q-1)$, hence $1<q'<N/2$ and
$p=Nq'/(N-2q')$. The space $\cD^{2,q'}(\rn)$ is endowed with the norm $|\Delta u|_{q'}$ and the functional $J$ is now given by
\[
J(u):=\frac1{q'}\int_{\rn}|\Delta u|^{q'}\,dx -\frac1p\int_{\rn}|u|^p\,dx.
\]

\begin{Prop}\label{prop:scalar}
Let 
$u_n\rightharpoonup u$ in $\cD^{2,q'}(\rn)$.
Assume that
\begin{equation}\label{eq:scalar2}
 J'(u_n)\vp\longrightarrow0,
 \qquad J'(u_n)[\psi(u_n-u)]\longrightarrow0
\end{equation}
for every $\vp,\psi\in \cC_0^\infty(\rn)$.
After passing to a subsequence, there is a finite set
$\Sigma\subset\rn$ such that
\[
 u_n\longrightarrow u\quad\hbox{in }W^{2,q'}_{\mathrm{loc}}(\rn\setminus\Sigma)
 \qquad \text{and} \qquad
 \Delta u_n\longrightarrow\Delta u \quad\hbox{a.e. in } \rn.
\]
In particular, $J'(u)=0$.
\end{Prop}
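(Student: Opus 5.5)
We follow the scheme of Proposition \ref{conccomp}, which becomes simpler in the scalar setting because there is no $\cW$-component. Set $z_n:=u_n-u\rightharpoonup0$ in $\cD^{2,q'}(\rn)$. By local Calder\'on--Zygmund estimates and Rellich--Kondrachov, after passing to a subsequence we have $z_n\to0$ in $W^{1,s}_{loc}(\rn)$ for $s<Nq'/(N-q')$, and $z_n\to0$ a.e. and in $L^r_{loc}$ for $r<p$. Passing to further subsequences we may assume
\[
|\Delta z_n|^{q'}\rightharpoonup\mu,\qquad |z_n|^p\rightharpoonup\rho
\]
in $\cM(\rn)$, where $\rho$ here plays the role of both $\rho$ and $\nu$ from Proposition \ref{conccomp}, since $u_n-u=z_n$. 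Applying the second-order Sobolev inequality to $\vp z_n$ and using that the commutator $\Delta(\vp z_n)-\vp\Delta z_n\to0$ in $L^{q'}$, \cite[Concentration--Compactness Lemma II]{Struwe} gives an at most countable set $\{x_j\}$ with $\rho=\sum\rho_j\delta_{x_j}$ and $S_{2,q'}\rho_j^{q'/p}\le\mu_j$.

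\emph{Upper bound for $\mu_j$.} Let $\psi\in\cC_0^\infty(\rn,[0,1])$. The hypothesis $J'(u_n)[\psi z_n]\to0$, together with the same commutator estimate and the boundedness of $A_{q'}(\Delta u_n)$ in $L^q$, yields
\[
\int\psi\langle A_{q'}(\Delta u_n),\Delta z_n\rangle=\int\psi|u_n|^{p-2}u_nz_n+o(1).
\]
We then apply \eqref{elementary} with $a=\Delta z_n$ and $b=\Delta u$, and H\"older's inequality with $u_n=z_n+u$. Choosing $\psi=\vp_{j,\eps}$ and letting first $n\to\infty$ and then $\eps\to0$ gives $\mu_j\le C\rho_j^{1/p'}\rho_j^{1/p}=C\rho_j$. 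Combining this with the lower bound shows $\rho_j\ge (S_{2,q'}/C)^{p/(p-q')}$. Since $\|\rho\|<\infty$, the set $\Sigma:=\{x_j\}$ is finite.

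\emph{Strong local convergence.} On a compact $K\Subset\rn\setminus\Sigma$ we take a cutoff $\eta$ with support disjoint from $\Sigma$. Then $z_n\to0$ in $L^p(\supp\eta)$. The identity above, combined with $\Delta z_n\rightharpoonup0$ in $L^{q'}$, gives
\[
\int\eta\langle A_{q'}(\Delta u_n)-A_{q'}(\Delta u),\Delta z_n\rangle\to0.
\]
By the monotonicity inequality for $1<q'<2$ (or the standard one $\ge c|a-b|^{q'}$ if $q'\ge2$, which is possible when $N\ge5$) and H\"older's inequality as in \eqref{q'<2}, we get $\Delta u_n\to\Delta u$ in $L^{q'}(K)$. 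Local Calder\'on--Zygmund estimates on $K\Subset K'\Subset\rn\setminus\Sigma$ control $\|z_n\|_{W^{2,q'}(K)}$ by $|\Delta z_n|_{L^{q'}(K')}+|z_n|_{L^{q'}(K')}$, so $u_n\to u$ in $W^{2,q'}_{loc}(\rn\setminus\Sigma)$. An exhaustion by $K_m$ and a diagonal argument give $\Delta u_n\to\Delta u$ a.e., since $\Sigma$ has measure zero.

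\emph{Conclusion.} Almost everywhere convergence together with boundedness gives $A_{q'}(\Delta u_n)\rightharpoonup A_{q'}(\Delta u)$ in $L^q$ and $|u_n|^{p-2}u_n\rightharpoonup|u|^{p-2}u$ in $L^{p'}$. Passing to the limit in $J'(u_n)\vp\to0$ for $\vp\in\cC_0^\infty$ and using density then gives $J'(u)=0$.

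The main delicate point is the upper bound for $\mu_j$. The only information available is the localized condition in \eqref{eq:scalar2}, so the test function $\psi z_n$ must be handled so that every cross term vanishes as $\eps\to0$. This works because the terms involving $u$ are absolutely continuous measures, and the commutator terms tend to zero by the local $W^{1,s}$ compactness.
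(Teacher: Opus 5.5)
Your proposal is correct and follows the paper's proof essentially step by step. The steps match in order: the commutator estimate plus Struwe's Lemma II give $S_{2,q'}\rho_j^{q'/p}\le\mu_j$; the localized identity from $J'(u_n)[\psi z_n]\to0$, combined with the elementary inequality and H\"older, gives $\mu_j\le C\rho_j$; and monotonicity of $A_{q'}$ on compact sets away from the finite set $\Sigma$, followed by a diagonal argument, finishes the proof. The only addition is that you make explicit the interior Calder\'on--Zygmund step behind the $W^{2,q'}_{\mathrm{loc}}(\rn\setminus\Sigma)$ convergence, which the paper leaves implicit.
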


The above result corresponds to Proposition A.1 and Lemma A.3 in \cite{cs}. As mentioned in the introduction, it also gives an additional information on local $W^{2,q'}$-convergence. The arguments in the proof below are similar but simpler than those in Section \ref{sec:cca}. Therefore we omit some details.

\begin{proof}
Put $z_n:=u_n-u$.
Below we will pass to subsequences whenever necessary, without explicitly mentioning it.
Local elliptic estimates and the Rellich--Kondrachov
theorem give
\begin{equation}\label{eq:scalar3}
 z_n\longrightarrow0\quad\hbox{in }W^{1,q'}_{\mathrm{loc}}(\rn),
 \qquad z_n\longrightarrow0\quad\hbox{a.e. in }\rn.
\end{equation}
Let $\mu$ and $\rho$ be positive measures such that 
\[
|\Delta z_n|^{q'}\rh \mu \quad \text{and} \quad |z_n|^p\rh \rho \quad \text{in } \cM(\rn).
\]
For each $\vp\in \cC_0^\infty(\rn)$,
\[
 \Delta(\vp z_n)=\vp\Delta z_n+
       2\nabla\vp\cdot\nabla z_n+(\Delta\vp)z_n,
\]
and by
\eqref{eq:scalar3}, the last two terms tend to zero in $L^{q'}(\rn)$. 
By \cite[Concentration--Compactness Lemma II]{Struwe}, we have
\begin{equation}\label{eq:atoms}
 \rho=\sum_{j\in I}\rho_j\delta_{x_j},\quad \mu \ge \sum_{j\in I}\mu_j\delta_{x_j} \quad \text{and} \quad
 \mu_j\geq S_{2,q'}\rho_j^{q'/p},\quad \rho_j>0,
\end{equation}
where $I$ is at most countable.

Next we derive inequalities between $\mu_j $ and $\rho_j$ which correspond to those  appearing at the end of the proof of Proposition \ref{conccomp}.
Recall that $A_s(a) := |a|^{s-2}a$. For every nonnegative $\vp\in \cC_0^\infty(\rn)$, the second
assumption in \eqref{eq:scalar2} and an adaptation of \eqref{eq:etaxi}-\eqref{eq:etaxi2} give
\begin{equation}\label{eq:scalar}
 \irn\vp A_{q'}(\Delta u_n)\Delta z_n\,dx
     =\irn\vp A_p(u_n)z_n\,dx+o(1).
\end{equation}
Note that here we have used the fact that $\Delta(\vp z_n) = \vp\Delta z_n + o(1)$.
As in \eqref{elementary},
\[
 |a|^{q'}\lesssim A_{q'}(a+b)a+|b|^{q'}\qquad(a,b\in\R).
\]
Applying this inequality with $a=\Delta z_n$, $b=\Delta u$ and using \eqref{eq:scalar} gives
\[
\irn\vp |\Delta z_n|^{q'}\,dx
\lesssim
\irn \vp|u_n|^{p-1}|z_n|\,dx+\irn \vp|\Delta u|^{q'}\,dx+o(1).
\]
Since \(u_n=z_n+u\), in the same way as in the proof of Proposition \ref{conccomp}, by the triangle and the H\"older inequalities we obtain
\[
\irn\vp|u_n|^{p-1}|z_n|\,dx \lesssim \irn\vp|z_n|^p\,dx + \left(\irn\vp|u|^p\,dx\right)^{1/p'}\left(\irn\vp|z_n|^p\,dx\right)^{1/p}.
\]
Letting $\vp_{j,\eps}$ be as in \eqref{phije}, using the preceding two inequalities and letting $n\to\infty$ gives 
\[
\irn\vp_{j,\eps}\,d\mu \lesssim \irn\vp_{j,\eps}\,d\rho + \left(\irn\vp_{j,\eps}|u|^p\,dx\right)^{1/p'}\left(\irn\vp_{j,\eps}\,d\rho\right)^{1/p} + \irn \vp_{j,\eps}|\Delta u|^{q'}\,dx,
\]
and thus, taking $\eps\to 0$, we obtain $\mu_j\le D\rho_j$. So by \eqref{eq:atoms},
\[
 S_{2,q'}\rho_j^{q'/p}\leq\mu_j\leq D\rho_j
\]
and
\begin{equation}\label{eq:scalarrho}
 \rho_j\geq\left(\frac{S_{2,q'}}{D}\right)^{p/(p-q')}.
\end{equation}
Since $\rho(\rn)<\infty$, the set
$\Sigma:=\{x_j:j\in I\}$ is finite.

Let $K\Subset\rn\setminus\Sigma$ and choose
$\eta\in \cC_0^\infty(\rn,[0,1])$ such that $\eta = 1$ on $K$ and $\supp\,\eta\cap\Sigma=\emptyset$. By \eqref{eq:atoms}, $z_n\to0$ in
$L^p(\operatorname{supp}\eta)$. Hence the right-hand side of
\eqref{eq:scalar} tends to zero. Weak convergence of
$\Delta z_n$ also gives
$\irn\eta A_{q'}(\Delta u)\Delta z_n\,dx\to0$. Subtracting, we find that
\[
 \irn\eta \bigl(A_{q'}(\Delta u_n)-A_{q'}(\Delta u)\bigr)
                     (\Delta u_n-\Delta u)\,dx\longrightarrow0.
\]
If $q'\geq2$, the inequality
$(A_{q'}(a)-A_{q'}(b))(a-b)\geq c_{q'}|a-b|^{q'}$ implies $\int_K|\Delta z_n|^{q'}\,dx\to 0$. If $1<q'<2$, then
\[
 (A_{q'}(a)-A_{q'}(b))(a-b)
       \geq c_{q'}\frac{|a-b|^2}{(|a|+|b|)^{2-q'}},
\]
with the quotient defined as zero when $a=b=0$. Then, as in \eqref{q'<2} and the inequality following it, we obtain 
\[
\int_K|\Delta z_n|^{q'}\,dx \le  \left(\int_K\frac{|\Delta z_n|^2}
          {(|\Delta u_n|+|\Delta u|)^{2-q'}}\,dx\right)^{q'/2}
 \left(\int_K(|\Delta u_n|+|\Delta u|)^{q'}\,dx\right)^{(2-q')/2} \longrightarrow 0.
\]
So in both cases $\Delta z_n\to 0$ and hence $\Delta u_n\to \Delta u$ a.e. in $K$. Now the diagonal argument as in the proof of Theorem  \ref{thm:main} implies a.e. convergence in $\rn$. This also implies that $J'(u)=0$. Finally, since $\Delta z_n\to 0$ in $L^{q'}(K)$ and $z_n\to 0$ in $L^p(K)$ for each $K$ as above, $u_n\to u$ in $W^{2,q'}_{\mathrm{loc}}(\rn\setminus\Sigma)$.
\end{proof}

\begin{Rem}\label{rem:cs}
\emph{
	Proposition \ref{prop:scalar} also applies to the equivariant setting of \cite{cs}. Let $G$ be a
	closed subgroup of $O(N)$, $\phi:G\to\{-1,1\}$ a continuous homomorphism and let $u_n,u$ be
	$\phi$-equivariant, i.e. $u_n(gx)=\phi(g)u_n(x)$. If \eqref{eq:scalar2} holds for all
	$\phi$-equivariant $\vp\in\cC_0^\infty(\rn)$ and all $G$-invariant $\psi\in\cC_0^\infty(\rn)$
	(these are the hypotheses of \cite[Proposition A.1]{cs}), then it holds for all
	$\vp,\psi\in\cC_0^\infty(\rn)$. }
	
	\emph{
	We also point out a gap in the proof of \cite[Proposition A.1]{cs}. There the a.e.
	convergence $\Delta u_n\to\Delta u$ follows from \cite[Lemma A.3]{cs} whose hypothesis
	\begin{equation*}
	\int_{\rn}\psi\,|\Delta u_n|^{q'-2}\Delta u_n\,\Delta(u_n-u)\,dx\to0
	\qquad\text{for all $G$-invariant }\psi\in\cC_0^\infty(\rn)
	\end{equation*}
	is derived by approximating $u_n-u$ in $L^p_{loc}(\rn)$ by mollifications uniformly in $n$.
	This is not possible because $(u_n-u)$ is not relatively compact in $L^p_{loc}(\rn)$. Proposition \ref{prop:scalar} fills this gap, and we expect it
	to be useful in further studies of Lane--Emden systems on the critical hyperbola.
	}
\end{Rem}

\section*{Statements and Declarations}
\noindent{\textbf{Funding.} I. Arkoudis and J. Mederski were partly supported by the National Science Centre, Poland (Grant No. 2023/51/B/ST1/00968). A. Szulkin was partly supported by a grant from the Royal Swedish Academy of Sciences.}

\medskip
\noindent{\textbf{Competing interests.}}
The authors have no relevant financial or non-financial interests
to disclose.

\medskip
\noindent{\textbf{Data availability.}}
	No datasets were generated or analyzed in this theoretical study.

\end{document}